\Crefname{counterexample}{counterexample}{counterexamples}
\Crefname{counterexample}{Counterexample}{Counterexamples}
\Crefname{conjecture}{conjecture}{conjectures}
\Crefname{conjecture}{Conjecture}{Conjectures}
\theoremstyle{plain}
\newtheorem{theorem}{Theorem}[section]
\newtheorem{proposition}[theorem]{Proposition}
\newtheorem{corollary}[theorem]{Corollary}
\newtheorem{lemma}[theorem]{Lemma}
\theoremstyle{definition}
\newtheorem{conjecture}[theorem]{Conjecture}
\newtheorem{remark}[theorem]{Remark}
\newcommand{\R}{\mathbb{R}}
\newcommand{\Z}{\mathbb{Z}}
\newcommand{\N}{\mathbb{N}}
\newcommand{\conv}{\mathrm{conv}}
\newcommand{\width}{\mathrm{width}}
\newcommand{\vol}{\operatorname{vol}}
\newcommand{\Flt}{\operatorname{Flt}}
\newcommand{\bt}{\mathbf t}
\newcommand{\bs}{\mathbf s}
\newcommand{\bo}{\mathbf 0}
\newcommand{\sh}[1]{s_{#1}^{(h)}}
\newcommand{\sv}[1]{s_{#1}^{(v)}}
\title[A local maximizer for lattice width of $3$-dim hollow bodies]{A local maximizer for lattice width of $3$-dimensional hollow bodies}
\author[G. Averkov, G. Codenotti, A. Macchia, F. Santos]{Gennadiy Averkov, Giulia Codenotti, Antonio Macchia, Francisco Santos}
\address{{\small G.~Averkov, Brandeburg University of Technology, Platz der Deutschen Einheit 1, 03046 Cottbus, Germany}}
\email{averkov@b-tu.de}
\address{{\small G.~Codenotti, Goethe-Universit\"at, FB 12 – Institut f\"ur Mathematik, Postfach 11 19 32, D–60054 Frankfurt am Main, Germany}}
\email{codenotti@math.uni-frankfurt.de}
\address{{\small A.~Macchia, Fachbereich Mathematik und Informatik, Freie Universit\"at Berlin, Arnimallee 2, 14195 Berlin, Germany}}
\email{macchia@zedat.fu-berlin.de}
\address{{\small F.~Santos, Depto.~de Matem\'aticas, Estad\'istica y Computaci\'on, Universidad de Cantabria, Spain
}}
\email{francisco.santos@unican.es}
\thanks{The second, third and fourth authors were supported by the Einstein Foundation Berlin under grant EVF-2015-230.
 Work of F. Santos is also supported by project MTM2017-83750-P of the Spanish Ministry of Science (AEI/FEDER, UE)}
\begin{document}

\begin{abstract}
The second and fourth authors have conjectured that a certain hollow tetrahedron $\Delta$ of width $2+\sqrt2$ attains the maximum lattice width among all three-dimensional convex bodies.
We here prove a local version of this conjecture: there is a neighborhood $U$ of $\Delta$ in the Hausdorff distance such that every convex body in $U\setminus\{\Delta\}$ has width strictly smaller than $\Delta$. When the search space is restricted to tetrahedra, we compute an explicit such neighborhood.

We also limit the space of possible counterexamples to the conjecture. We show, for example, that their width must be smaller than $3.972$ and their volume must lie in $[2.653, 19.919]$.
\end{abstract}

\maketitle

\section{Introduction}
In the paper \cite{CodSan}, the second and fourth authors explore lower bounds for the lattice width of hollow convex bodies.
Recall that a convex body $K \subset \R^d$ is \emph{hollow} with respect to an affine $d$-dimensional lattice $\Lambda \subset \R^d$ if $\Lambda$ does not intersect the interior of $K$.
The width of $K$ in the direction of a linear functional $f$, denoted $\width(K,f)$, is the length of the segment $f(K)$.
The \emph{(lattice) width} of $K$ with respect to $\Lambda$, denoted $\width_{\Lambda}(K)$, is the minimum width with respect to all non-zero lattice functionals in  $\vec \Lambda^*$, the linear lattice dual to $\vec \Lambda :=\Lambda-\Lambda$.
The famous Flatness Theorem says that the so-called \emph{flatness constant},
\[
	\Flt(d) :=  \max \{ \width_{\Lambda}(K) \, :\, K \subset \R^d \ \text{hollow with respect to} \ \Lambda \},
\]
is a finite value.
The flatness constant does not depend on $\Lambda$, because one can change coordinates and fix $\Lambda = \Z^d$. Still, in our considerations it will make sense to allow for other lattices to highlight symmetry in the constructions.

Much work has been done in improving asymptotic upper bounds for $\Flt(d)$, from Khinchine's original proof in 1948 \cite{Khinchine} to the current best bound
\[
\Flt(d) \in O(d^{4/3} \log^a d),
\]
for a constant $a$, proved by Rudelson in 2000 \cite{Rudelson}.\footnote{Rudelson's paper does not mention $\Flt(d)$ explicitly, but the asserted bound follows from Rudelson's main result by applying the so-called $M M^\ast$ estimate to the flatness constant, as explained in \cite[p. 729, l.~19]{Banaszczyk_etal}}
See also~\cite{Banaszczyk_etal, KanLov} or \cite[Sect.~VII.8]{Barvinok} for more information, and \cite{Dash_etal} for applications to linear programming in fixed dimension.
Finding the exact value of $\Flt(d)$ is a remarkably hard task, even in very low dimensions. Currently, we only know that $\Flt(1)=1$, which is trivial, and $\Flt(2) = 1  + 2 / \sqrt{3}$, which is a result of Hurkens \cite{Hurkens} (see also \cite{AveWag}). In \cite{CodSan} the inequality  $\Flt(3) \ge 2 + \sqrt{2}$ is proven by introducing
 the tetrahedron
$
\Delta= \conv(a_1,a_2,a_3,a_4),
$
where
\begin{align}
\begin{array}{ll}
a_1=\big( 2+ {\sqrt2},  {\sqrt2},  2+ {\sqrt2} ),&
a_2=\big(- {\sqrt2}, 2+{\sqrt2},  -2- {\sqrt2} ),\\
a_3=\big(-2- {\sqrt2}, -{\sqrt2},  2+ {\sqrt2} ),&
a_4=\big( {\sqrt2}, -2-{\sqrt2},  -2- {\sqrt2} )
\label{eq:Delta}
\end{array}
\end{align}
(see \Cref{fig:width3.4}).
With respect to the affine lattice
\[
\Lambda:= \left\{ (a,b,c) : a,b,c \in 1+2\Z, a+b+c \in 1 + 4\Z \right\},
\]
$\Delta$ is hollow and has width $2+\sqrt2$. More precisely, $\Delta$ attains that width with respect to seven different functionals in $\vec \Lambda^*$, namely:
{\large
\begin{align}
\label{eq:extr_functionals}
\begin{array}{llll}
\frac14(1,1,1),& \qquad
\frac14(-1,1,1), &\qquad
\frac14(1,1,-1), &\qquad
\frac14(1,-1,1), \\
&\qquad
\frac12(1,0,0), &\qquad
\frac12(0,1,0), &\qquad
\frac12(0,0,1).
\end{array}
\end{align}
}
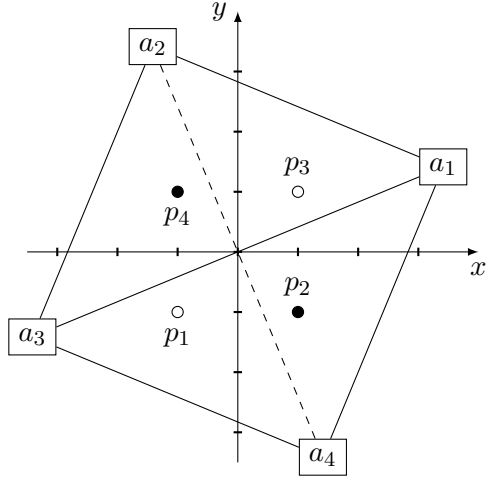
\begin{figure}[htb]
  \centering
\begin{tikzpicture}[bnode/.style={circle, inner sep=1.5pt, fill=black},
wnode/.style={circle, inner sep=1.5pt, fill=white},
scale =  .8]
  \node[draw,bnode, label=below:{$p_4$}] () at (-1, 1) {};
  \node[draw,wnode, label=above:{$p_3$}] () at (1, 1) {};
  \node[draw,wnode, label=below:{$p_1$}] () at (-1, -1) {};
  \node[draw,bnode, label=above:{$p_2$}] () at (1, -1) {};

   \tkzInit[xmax=3.5,ymax=3.5,xmin=-3.5,ymin=-3.5]
        \tkzDrawX
        \tkzDrawY

  \node[draw, label=right:{}] (a_1) at (3.4142, 1.4142) {$a_1$};
  \node[draw, label=above:{}] (a_2) at (-1.4142, 3.4142) {$a_2$};
  \node[draw, label=left:{}] (a_3) at (-3.4142, -1.4142) {$a_3$};
  \node[draw, label=below:{}] (a_4) at (1.4142, -3.4142) {$a_4$};
  \draw[black] (a_1) -- (a_2) -- (a_3) -- (a_4) -- (a_1) -- (a_3);
  \draw[dashed] (a_2) -- (a_4);
\end{tikzpicture}

\caption{A hollow $3$-simplex $\Delta$ of width $2+\sqrt2$}
\label{fig:width3.4}
\end{figure}
As indicated in \Cref{fig:width3.4}, each facet of $\Delta$ contains a single lattice point. More precisely, for each permutation $(i,j,k,l)$ of $\{1,2,3,4\}$ the facet spanned by $\{a_i,a_j,a_k\}$ contains the lattice point $p_l$ with the following coordinates:
\begin{align*}
p_1=(-1,-1,-1)  , \quad&
p_2=(1,-1,1)   , \quad&
p_3=(1,1,-1)  , \quad&
p_4=(-1,1,1).
\end{align*}
These four lattice points form an affine lattice basis for $\Lambda$.

\begin{conjecture}[{\cite[Conjecture 1.2]{CodSan}}]
\label{conj:maximal}
No hollow convex $3$-body has width larger than $\Delta$.
That is, $\Flt(3) = 2+ \sqrt{2}$.
\end{conjecture}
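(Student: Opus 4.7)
My strategy for proving the upper bound $\Flt(3)\le 2+\sqrt2$ is to reduce the conjecture to a finite classification problem and then attack it case by case. Since any hollow body $K$ is contained in some inclusion-maximal hollow body $K'$, and $\width_\Lambda(K')\ge\width_\Lambda(K)$ (enlarging can only increase each directional width, hence their minimum), one may restrict attention to \emph{maximal} hollow convex bodies. A standard boundary-pushing argument then shows that every such body is a polytope and that each of its facets must contain at least one lattice point. This places the problem in a combinatorial framework driven by lattice-point incidences.

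The next step is to stratify by combinatorial type. For each combinatorial type of polytope (tetrahedron, pyramid, bipyramid, octahedron, etc.) together with a prescribed pattern of lattice points on its facets, the space of maximal hollow realizations is finite-dimensional, and on it the width function $K\mapsto\width_\Lambda(K)$ is a continuous, piecewise-algebraic minimum of finitely many linear forms. One would then aim to show that on each stratum the maximum of $\width_\Lambda$ is either strictly less than $2+\sqrt2$ or is attained, up to lattice symmetry, at $\Delta$. The width bound $3.972$ and the volume bounds $[2.653,19.919]$ proved in this paper, combined with hollowness, bound both the diameter and the number of facets of any potential counterexample, so the list of combinatorial types to be considered is finite.

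The local maximality result of this paper then plays the role of a base case: within the stratum of tetrahedra near $\Delta$, no competitor exists. To globalize, one would invoke compactness of the quotient of the stratified moduli space by the affine unimodular group, intersected with $\{\width_\Lambda\ge 2+\sqrt2-\epsilon\}$, so that any competitor would itself have to be a local maximum of $\width_\Lambda$ in its stratum, and then rule these out stratum by stratum using the local techniques already developed. The main obstacle, and the reason the conjecture is still open, is the classification of maximal hollow $3$-polytopes with more than four facets: unlike the simplicial case, no complete enumeration is known, and controlling candidates with many facets will presumably require either a new rigidity theorem—for instance, that in a near-extremal hollow body each facet must be lattice-free of small area, severely restricting its combinatorics—or a careful computer-assisted enumeration with certified arithmetic.
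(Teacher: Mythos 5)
The statement you are addressing is stated in the paper as a \emph{conjecture}: the paper does not prove it, and neither do you. What you have written is a research programme, not a proof, and you concede as much in your final sentence. Since the task is to establish $\Flt(3)=2+\sqrt2$, an argument that ends with ``the main obstacle \dots is the classification of maximal hollow $3$-polytopes with more than four facets'' has not closed the argument; that obstacle is essentially the entire content of the conjecture.

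Two concrete gaps deserve naming. First, your claim that the width and volume bounds ``bound \dots the number of facets of any potential counterexample'' does not follow: volume and diameter bounds on $K$ say nothing about its facet count. What does bound the number of facets is the known fact (a Doignon/Bell/Scarf-type argument, in the spirit of Lov\'asz's theorem quoted as Proposition 5.4) that a maximal hollow body in $\R^d$ has at most $2^d$ facets, since lattice points in the relative interiors of distinct facets must be pairwise incongruent modulo $2$; you need to invoke that, not the metric bounds. Second, and more seriously, the step ``any competitor would itself have to be a local maximum of $\width_\Lambda$ in its stratum, and then rule these out stratum by stratum using the local techniques already developed'' is circular as stated: the KKT-based local analysis in the paper certifies local maximality at one \emph{known} point $\Delta$; it provides no mechanism for locating or excluding unknown local maxima elsewhere in a positive-dimensional semialgebraic stratum. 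Even within the tetrahedron stratum, the paper establishes only local maximality of $\Delta$, not that $\Delta$ is the global maximum over all hollow tetrahedra. The paper itself observes that each fixed-inscribed-polytope case is in principle decidable by quantifier elimination but that this is computationally hopeless; your proposal does not supply anything that replaces that missing step. What you have correctly reproduced is the paper's own framing of the search space (maximality, inscribed empty polytopes, metric bounds) together with its local result as a ``base case''; but the conjecture remains open precisely because the global, per-stratum maximization has not been carried out.
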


We here give evidence for this conjecture, proving a local version of it:

\begin{theorem}
\label{theorem:main}
The tetrahedron $\Delta$ is a strict local maximizer for width among hollow tetrahedra. That is, every small perturbation of $\Delta$ is either non-hollow or has width strictly smaller than $2+\sqrt2$.
\end{theorem}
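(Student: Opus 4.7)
The strategy is a first-order infinitesimal analysis. Parameterize a small perturbation of $\Delta$ as $\Delta' = \conv(a_1+\varepsilon_1,\ldots,a_4+\varepsilon_4)$ with each $\varepsilon_i\in\R^3$, giving $12$ real parameters. A direct computation shows that, for each of the seven functionals $f_k$ in \eqref{eq:extr_functionals}, the maximum and minimum of $f_k$ over $\{a_1,\dots,a_4\}$ are each attained at a \emph{unique} vertex $a_{M(k)}$, $a_{m(k)}$; hence for $\|\varepsilon\|$ small the same vertices realize the max and min in $\Delta'$, and
$$
\width(\Delta',f_k) = (2+\sqrt{2}) + L_k(\varepsilon), \qquad L_k(\varepsilon) := f_k\bigl(\varepsilon_{M(k)} - \varepsilon_{m(k)}\bigr).
$$
Since all other lattice functionals give strictly larger widths on $\Delta$ and width depends continuously on $\Delta'$, for $\|\varepsilon\|$ small the width of $\Delta'$ still equals $\min_{k=1,\dots,7}\width(\Delta',f_k)$; in particular $\width(\Delta')\ge 2+\sqrt{2}$ iff $L_k(\varepsilon)\ge 0$ for all $k$.

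To encode hollowness, observe that each facet $F_l$ of $\Delta$ contains a single lattice point $p_l$ in its relative interior, and these are the only lattice points that could enter $\Delta'$ for $\|\varepsilon\|$ small. The perturbed facet $F_l'$ sweeps $p_l$ into the interior of $\Delta'$ exactly when a certain linear form $H_l$ in the three $\varepsilon_i$ spanning $F_l$ is strictly positive, so $\Delta'$ is hollow iff $H_l(\varepsilon)\le 0$ for $l=1,\dots,4$. The theorem thus reduces to showing that the only $\varepsilon\in\R^{12}$ satisfying the $11$-inequality system
$$
L_k(\varepsilon)\ge 0\ (k=1,\dots,7), \qquad H_l(\varepsilon)\le 0\ (l=1,\dots,4)
$$
is $\varepsilon=0$.

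The plan for this final step is a direct verification in $\mathbb{Q}[\sqrt{2}]$, organized using the $\mathbb{Z}/4$-symmetry of $\Delta$ given by $(x,y,z)\mapsto(y,-x,-z)$, which cycles $a_1\to a_4\to a_3\to a_2\to a_1$ and permutes both the $f_k$ and the $p_l$. A helpful sanity check is that for a pure translation $\varepsilon_i\equiv v$ every $L_k$ vanishes and $H_l(v,v,v,v)=\langle n_l,v\rangle$, where $n_l$ is the outer normal of $F_l$; since the four outer normals of a tetrahedron satisfy a positive linear relation, the $H_l\le 0$ constraints force $v=0$, so the translation direction (which would otherwise preserve width) is already killed by hollowness. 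The goal is then to exhibit coefficients $c_k,d_l\ge 0$ with $\sum_k c_k L_k - \sum_l d_l H_l \equiv 0$ on $\R^{12}$, and such that the associated equality system has trivial kernel; by Farkas' lemma this forces the feasible cone to be $\{0\}$.

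The main obstacle is carrying out this $11\times 12$ explicit computation cleanly. The $\mathbb{Z}/4$-symmetry decomposes $\R^{12}$ into isotypic blocks, and the inequality system respects this decomposition, so the analysis should split into a few small sub-problems solvable by hand. If the first-order cone were to degenerate to a proper subspace, a second-order argument leveraging the piecewise-linearity (hence chamberwise convexity) of $\width(\Delta',f)$ in $\varepsilon$ would close the remaining directions, since any tangent direction that is first-order neutral on every $L_k$ and $H_l$ would, when extended to a one-parameter family, have to cross a kink that strictly decreases some $\width(\Delta',f_k)$ on at least one side while already violating hollowness on the other.
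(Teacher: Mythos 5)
Your setup is sound and matches the paper's in spirit (the paper works in the dual picture, fixing $\Delta$ and perturbing the lattice points $p_1,\dots,p_4$ within their facet planes, which is equivalent to your vertex-perturbation picture via $\Delta'=A\Delta \leftrightarrow \Lambda'=A^{-1}\Lambda$). But the core of your plan --- that a Farkas certificate for the linearized system forces the feasible cone to be $\{0\}$ --- cannot work, and the paper's own computation shows why. After imposing the four hollowness equalities (your $H_l=0$, an $8$-dimensional slice of your $\R^{12}$), the six relevant width gradients have rank only $5$: they satisfy the positive dependence $\nabla h_1+\nabla h_2+\nabla h_3+\nabla h_4+\sqrt2(\nabla h_5+\nabla h_6)=0$ displayed in \eqref{eq:nablas}. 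So the Farkas/KKT argument you propose does exist, but it only shows that every first-order feasible direction must annihilate all the $L_k$ and $H_l$ simultaneously --- and the space of such directions is $3$-dimensional, not trivial. The theorem is genuinely a second-order statement, and the paper's proof of it (the auxiliary function $h=\sum(c-l_i)\lambda_i h_i$ and Lemma~\ref{lemma:small_c}, a symbolic verification that a certain $3\times3$ Hessian restricted to the neutral space $V$ is negative definite) is precisely the part your proposal leaves as a contingency.

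Moreover, your contingency plan identifies the wrong mechanism. In your fixed-lattice picture each $\width(\Delta',f_k)=\max_i f_k(a_i+\varepsilon_i)+\max_j(-f_k(a_j+\varepsilon_j))$ is a \emph{convex} piecewise-linear function of $\varepsilon$; along any first-order-neutral direction it is therefore $\ge 2+\sqrt2$ for all $t$, and no ``kink that strictly decreases some $\width(\Delta',f_k)$'' can occur. The second-order obstruction along the neutral $3$-dimensional family lies entirely in the hollowness conditions, which are determinant (degree-$3$) conditions that you have replaced by their linearizations $H_l$; it is the curvature of these constraints (equivalently, of the perturbed dual functionals $M(\bt)^{-1}u_i$ in the paper's coordinates) that kills the neutral directions. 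Two smaller points: your claim that all seven functionals of \eqref{eq:extr_functionals} attain their max and min at unique vertices is false for the seventh one (it is maximized at $a_1,a_3$ and minimized at $a_2,a_4$; the paper discards it for exactly this reason), and ``$p_l$ enters the interior of $\Delta'$ exactly when a linear form is positive'' should read ``a determinant is positive,'' whose linearization is your $H_l$ --- a distinction that is harmless at first order but fatal once the argument must go to second order.
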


\begin{corollary}
\label{coro:main}
The tetrahedron $\Delta$ is a strict local maximizer for width among hollow convex $3$-bodies. More precisely,
there exists an open set $U\subset \R^3$ containing  $\Delta$ and such that every hollow convex body $K \subset U$ different from $\Delta$ has width strictly smaller than $2 + \sqrt{2}$.
\end{corollary}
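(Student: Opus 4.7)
The plan is to bootstrap from the tetrahedron case (\Cref{theorem:main}) by showing that every hollow convex body sufficiently close to $\Delta$ can be enclosed in a hollow tetrahedron that is itself close to $\Delta$. I would argue by contradiction: suppose there is a sequence of hollow convex $3$-bodies $K_n$ with $K_n \to \Delta$ in Hausdorff distance, $K_n \ne \Delta$, and $\width_\Lambda(K_n) \ge 2 + \sqrt 2$, and aim to derive a contradiction from \Cref{theorem:main}.

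The central construction is a hollow circumscribing tetrahedron $T_n \supseteq K_n$. Hollowness of $K_n$ means $p_i \notin \operatorname{int}(K_n)$ for each of the four lattice points $p_1, \dots, p_4$, so separation yields a hyperplane $H_{i,n}$ through $p_i$ with $K_n$ contained in the closed half-space $H_{i,n}^+$ on the $\Delta$-side (if $p_i \in \partial K_n$ one takes a supporting hyperplane at $p_i$; if $p_i \notin K_n$ one strictly separates and then translates to pass through $p_i$). Since $p_i$ lies in the \emph{relative interior} of the facet $F_i$ of $\Delta$, the affine hull of $F_i$ is the unique supporting hyperplane of $\Delta$ at $p_i$; hence, by upper semicontinuity of normal cones, any valid choice of $H_{i,n}$ must satisfy $H_{i,n} \to F_i$ as $K_n \to \Delta$. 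Setting $T_n := \bigcap_{i=1}^4 H_{i,n}^+$, for $n$ large $T_n$ is a genuine tetrahedron converging to $\Delta$ in Hausdorff distance and containing $K_n$. Each $p_i$ lies on $\partial T_n$ by construction, and the other points of $\Lambda$ are bounded away from $\Delta$, so $T_n$ is hollow.

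Applying \Cref{theorem:main} to $T_n$, either $T_n = \Delta$ or $\width_\Lambda(T_n) < 2 + \sqrt 2$. In the latter case $K_n \subseteq T_n$ would give $\width_\Lambda(K_n) < 2 + \sqrt 2$, contradicting our assumption; so $T_n = \Delta$ and hence $K_n \subseteq \Delta$. Then $\width(K_n, f) \le \width(\Delta, f)$ for every $f \in \vec\Lambda^*$, which together with $\width_\Lambda(K_n) \ge 2 + \sqrt 2$ forces $\width(K_n, f_j) = 2 + \sqrt 2 = \width(\Delta, f_j)$ for each of the seven functionals $f_j$ of \eqref{eq:extr_functionals}. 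A direct inspection of the coordinates in \eqref{eq:Delta} shows that every vertex $a_i$ is the unique maximum or minimum on $\Delta$ of at least one of these $f_j$, so every $a_i$ must also lie in $K_n$; by convexity $K_n \supseteq \Delta$, whence $K_n = \Delta$, contradicting $K_n \ne \Delta$.

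The step I expect to be most delicate is the construction of $H_{i,n}$ converging to $F_i$: the separating or supporting hyperplane through $p_i$ is in general not unique for $K_n$, and one must verify a continuous selection as $K_n \to \Delta$, leaning crucially on the fact that $p_i$ sits in the relative interior of a $2$-face of $\Delta$. Once this is settled, the hollowness of $T_n$ and the appeal to \Cref{theorem:main} are routine, and the final vertex-pinning step is an explicit finite calculation with the coordinates of the $a_i$ and the functionals listed in \eqref{eq:extr_functionals}.
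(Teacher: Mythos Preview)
Your proposal is correct and follows essentially the same strategy as the paper: separate $K$ from each $p_i$ by a hyperplane, intersect the half-spaces to obtain a hollow tetrahedron close to $\Delta$, invoke \Cref{theorem:main}, and finish the case $K\subseteq\Delta$ using the width functionals. The paper's proof differs only in presentation: it argues directly rather than via a contradicting sequence, and instead of appealing to semicontinuity of normal cones it first observes that the two functionals $x/2$ and $y/2$ alone force $K$ to contain points near all four vertices $a_i$ (since each $a_i$ is the unique extremum of one of them), which immediately pins the separating hyperplanes near the facets. Your worry about a ``continuous selection'' of $H_{i,n}$ is unnecessary: any choice works, because if some subsequence of $H_{i,n}$ converged to a hyperplane $H\ne\operatorname{aff}(F_i)$ through $p_i$, then $\Delta=\lim K_n$ would lie in the corresponding closed half-space, contradicting that $\operatorname{aff}(F_i)$ is the unique supporting hyperplane of $\Delta$ at $p_i$.
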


\begin{proof}
Since $0$ lies in the interior of $\Delta$, we can choose $U$ to be the interior of $(1+ \epsilon) \Delta$, where $\epsilon>0$ is small enough. Let $K$ be a hollow convex body contained in $U$.

Consider the functionals $f_1:=x/2$ and  $f_2:=y/2$, which are in $\Lambda^*$ and both give width $2+\sqrt2$ to $\Delta$. As seen in \Cref{fig:width3.4}, $f_1$ attains its unique minimum and maximum over $\Delta$ at $a_3$ and $a_1$, while $f_2$ attains them at $a_4$ and $a_2$.
Hence, in order for $\width(K,f_i) \ge 2 + \sqrt{2}$ for both functionals, $K$ must contain points at distance $O(\epsilon)$ to each of the four vertices of $\Delta$. Without loss of generality, in the rest of the proof we assume this to happen.

Consider now the lattice points $p_1,\dots,p_4$, lying respectively in the relative interior of the four facets of $\Delta$.
Since $K$ is hollow, there are planes $H_1,\dots,H_4$ weakly separating $K$ from them. The fact that $K$ is contained in a small neighborhood of $\Delta$ and that it contains points close to all vertices of $\Delta$ implies that the inequalities for $H_1,\dots,H_4$ are close to the facet inequalities of $\Delta$. Thus, by \Cref{theorem:main}, the tetrahedron $\Delta'$ defined by $H_1,\dots,H_4$ has width bounded by $2+\sqrt2$, with equality if and only if $\Delta'=\Delta$.

Since $\Delta'$ contains $K$, we conclude that also $K$ has its lattice width bounded by $2+\sqrt2$ and that equality can only occur if $\Delta'=\Delta$, which implies $K\subset\Delta$. If this containment is strict, then $\width(K) < \width(\Delta)$ because of the functionals $f_1$ and $f_2$.
\end{proof}

In \Cref{sec:setting} we transform \Cref{theorem:main} into a more explicit \Cref{theorem:6functionals}, which is then proved in \Cref{sec:neighborhood} with a method based on the Karush-Kuhn-Tucker (KKT) conditions for local optimality.

\Cref{sec:explicit} is devoted to computing an explicit neighborhood of $\Delta$ in the space of all tetrahedra where $\Delta$ is guaranteed to be the unique maximizer for width. In order to do this, it is convenient to think of $\Delta$ as fixed and let the lattice $\Lambda$ vary, instead of the opposite. That is, for any choice of points $p'_1,\dots,p'_4$ lying respectively in the facet of $\Delta$ containing the corresponding $p_1, \dots, p_4$, we consider the affine lattice $\Lambda'$ generated by $p'_1,\dots,p'_4$. The problem can now be reformulated as asking how far each $p'_i$ can be from $p_i$ while maintaining $\width_{\Lambda'}(\Delta) \le\width_{\Lambda}(\Delta)$.
A natural measure for the distance between $p_i$ and $p'_i$ is the $L_\infty$ distance between their vectors of barycentric coordinates in $\Delta$; we denote this $\operatorname{dist}_\Delta(p_i, p'_i)$. Our main result in \Cref{sec:explicit} can then be stated as follows (see also \Cref{thm:explicit} in \Cref{subsec:bs}, which uses a slightly different metric):

\begin{theorem}
\label{thm:barycentric}
Let $p'_1,\dots,p'_4$ be points lying respectively in the same facet of $\Delta$ as $p_1,\dots,p_4$ and let $\Lambda'$ be the affine lattice spanned by $p'_1,\dots,p'_4$.
If $\operatorname{dist}_\Delta(p_i, p'_i) \le 0.01307$ for every $i$, then
\[
\width_{\Lambda'}(\Delta) \le \width_{\Lambda}(\Delta),
\]
with equality if and only if $(p'_1,\dots,p'_4) = (p_1,\dots,p_4)$.
\end{theorem}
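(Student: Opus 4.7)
The plan is to find, among the seven ``perturbed'' versions of the extremal functionals $f_1,\dots,f_7$ in \eqref{eq:extr_functionals}, at least one that still gives $\Delta$ width at most $2+\sqrt 2$. For each $j$ let $f'_j\in\vec{\Lambda'}^{*}$ be the unique linear functional satisfying $f'_j(p'_i)=f_j(p_i)$ for $i=1,\dots,4$; this is well defined because $(p'_1,\dots,p'_4)$ is an affine basis of $\Lambda'$. Writing $a\in\R^3$ in the barycentric coordinates $\alpha_i(a)$ with respect to that basis yields the closed form $f'_j(a)=\sum_{i=1}^{4}f_j(p_i)\,\alpha_i(a)$, so the desired conclusion reduces to showing $\min_{j=1,\dots,7}\width(\Delta,f'_j)\le 2+\sqrt 2$, with equality only at $p'=p$.

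Because $\Delta$ has only four vertices and the extrema of each $f_j$ over $\Delta$ are attained at specific unique vertices (as read off from \Cref{fig:width3.4}), each width $w_j(p'):=\width(\Delta,f'_j)$ is a rational function of $(p'_1,\dots,p'_4)$ and admits a first-order expansion $w_j(p')=(2+\sqrt 2)+\langle\nabla w_j,\epsilon\rangle+R_j(\epsilon)$, where $\epsilon_i:=p'_i-p_i$ is constrained to the affine span of the facet of $\Delta$ containing $p_i$ and $R_j$ collects the higher-order terms. The qualitative local version of the result will then follow from the KKT-type argument used in \Cref{theorem:main}, applied in the ``dual'' direction: the optimality of $(p_1,\dots,p_4)$ supplies non-negative multipliers $\lambda_1,\dots,\lambda_7$, not all zero, such that $\sum_j\lambda_j\langle\nabla w_j,\epsilon\rangle\le 0$ for every admissible $\epsilon$, with equality only at $\epsilon=0$. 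Averaging then forces $\min_j w_j(p')<2+\sqrt 2$ for every $p'\ne p$ sufficiently close to $p$.

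The main difficulty is converting this qualitative statement into the explicit radius $0.01307$ in the barycentric $L_\infty$ metric. For this one needs three quantitative inputs on the ball of that radius: (i) a uniform positive lower bound $c>0$ on the linear descent $-\sum_j\lambda_j\langle\nabla w_j,\epsilon\rangle/\|\epsilon\|_\infty$ over admissible unit directions; (ii) a uniform upper bound on the second-order remainder $\sum_j\lambda_j R_j(\epsilon)$, which in turn rests on Hessian bounds for the maps $p'\mapsto\alpha_i(a_k)$; and (iii) a combinatorial check that the identification of the two extremal vertices of $\Delta$ for each $f'_j$ is stable on the whole ball. Point (iii) amounts to verifying finitely many affine inequalities $f'_j(a_k)\lessgtr f'_j(a_l)$ on the perturbation domain; point (i) reduces to a finite convex computation over the eight-dimensional admissible cone; and point (ii) follows from explicit differentiation of the rational expressions for $w_j$. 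The value $0.01307$ should be the largest radius for which the linear descent from (i) dominates the quadratic remainder from (ii), and carrying out the resulting estimates sharply enough to recover this explicit constant -- rather than a much smaller one -- is the quantitative heart of the argument.
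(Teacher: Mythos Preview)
Your proposal has the right overall architecture (perturb the lattice, track the seven extremal functionals, use a KKT-type argument, then make it quantitative), but there is a genuine gap in the order of the analysis.

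First, a minor but concrete error: it is not true that ``the extrema of each $f_j$ over $\Delta$ are attained at specific unique vertices''. For the seventh functional (the one corresponding to $\tfrac12(0,0,1)$, or $u_7=(1,1,0)$ in the lattice basis) the maximum is attained at \emph{both} $a_1$ and $a_3$ and the minimum at \emph{both} $a_2$ and $a_4$. The paper deals with this by simply discarding $f_7$ and working with the six remaining functionals; your smoothness and stability claims in item~(iii) would fail for $j=7$.

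The serious issue is your item~(i). The KKT condition at the maximizer does \emph{not} give a positive ``linear descent'': what it gives is a strictly positive dependence $\sum_j\lambda_j\nabla w_j(\mathbf 0)=0$ among the six gradients (in the paper, $(\lambda_1,\dots,\lambda_6)=(1,1,1,1,\sqrt2,\sqrt2)$). Hence $\sum_j\lambda_j\langle\nabla w_j,\epsilon\rangle\equiv 0$ for every admissible $\epsilon$, and the quantity you want to bound below in~(i) is identically zero. Consequently the framing ``the linear descent from~(i) dominates the quadratic remainder from~(ii)'' cannot produce any positive radius. The strict local maximality is a genuinely \emph{second-order} phenomenon: one must show that a suitable combination of the functionals has negative-definite Hessian at $\mathbf 0$. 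Concretely, the paper clears denominators to get polynomials $h_i$, sets $l_i=\lambda_i\nabla h_i(\mathbf 0)\cdot\bt$, and studies
\[
h(\bt)=\sum_{i=1}^{6}\bigl(c-l_i(\bt)\bigr)\,\lambda_i h_i(\bt),
\]
which has $\nabla h(\mathbf 0)=0$ by KKT; the work is to show that $\nabla^2 h(\mathbf 0)$ is negative definite for suitable $c>0$, and then to propagate this to a neighborhood.

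For the explicit constant, the paper does not balance ``linear versus quadratic'' as you suggest, but rather balances two $c$-dependent conditions: (a) the six linear multipliers $c-l_i(\bs)$ stay positive, and (b) the Hessian $\nabla^2 h(\bs)$ stays negative definite. Together with two $c$-independent checks (positivity of $\det M(\bs)$ and stability of the extremal vertex for each of the six functionals, your item~(iii)), the choice $c=9.75$ yields the $L_\infty$ radius $0.02614$ in the adapted $\bs$-coordinates. The statement you are proving is then a one-line corollary: the relation \eqref{eq:barycentric} between barycentric and $\bs$-coordinates shows that a barycentric perturbation of size at most $\varepsilon$ moves $\bs$ by at most $2\varepsilon$, so $0.01307=0.02614/2$.
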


In \Cref{sec:global} we explore properties that potential convex $3$-bodies of width larger than $2+\sqrt2$ must satisfy. That is to say, we try to specify a search space for validating (or refuting) \Cref{conj:maximal}. Our main results are that such bodies must have:
\begin{itemize}
\item  width bounded above by $3.972$, that is, we prove $\Flt(3) < 3.972$;
\item  volume between $2.653$ and $19.919$;
\item  an inscribed lattice polytope that is either a unimodular quadrilateral or an empty $3$-polytope of volume bounded above by $22/3$.
\end{itemize}
Recall that a \textit{lattice polytope} is a polytope with vertices in the lattice, and it is called \emph{empty} if its only lattice points are its vertices. Whenever we refer to volume, we normalize it to a fundamental domain of the lattice.

\section{Setting the problem}
\label{sec:setting}

To prove \Cref{theorem:main} we find more convenient to look at perturbations of the lattice, keeping $\Delta$ fixed, rather than the other way around.
That is to say, we fix $\Delta$ to have the vertex coordinates of \Cref{eq:Delta} and we let
$\Lambda(\mathbf t)$ be the affine lattice generated by:
\begin{align*}
p_1(\mathbf t)=(-1,-1,-1)  + (t_{11}, t_{12}, t_{13}), \qquad&
p_2(\mathbf t)=(1,-1,1)  + (t_{21}, t_{22}, t_{23}), \\
p_3(\mathbf t)=(1,1,-1)  + (t_{31}, t_{32}, t_{33}), \qquad&
p_4(\mathbf t)=(-1,1,1) + (t_{41}, t_{42}, t_{43}),
\end{align*}
where the $t_{ij}$'s are variables.
Observe that $\Lambda(\mathbf 0)=\Lambda$.
Our task is to study the width of $\Delta$ with respect to $\Lambda(\mathbf t)$ as a function of $\mathbf t$ and to show that $\mathbf 0$ is a strict local maximizer of it, under the constraint that $\Delta$ is hollow.

Since a tetrahedron of maximal width necessarily has at least one lattice point on (the relative interior of) every facet, and since the facets of $\Delta$ contain each a single point of $\Lambda$, there is no loss of generality in constraining the variables $t_{ij}$ to values where we have the coplanarities  $a_1a_2a_3p_4$, $a_1a_2p_3a_4$, $a_1p_2a_3a_4$ and $p_1a_2a_3a_4$. In practice this means we can express the $t_{*3}$'s in terms of the $t_{*1}$'s and $t_{*2}$'s as follows:
\begin{gather*}
 t_{13}=-\frac{(2+\sqrt2) t_{11}+\sqrt2 t_{12}}{2}, \qquad t_{23}=\frac{-\sqrt2 t_{21}+(2+\sqrt2)t_{22}}{2},\\
 t_{33}=\frac{(2+\sqrt2)t_{31}+\sqrt2 t_{32}}{2}, \qquad t_{43}=\frac{\sqrt2 t_{41}-(2+\sqrt2)t_{42}}{2}.
\end{gather*}
Thus, in what follows we denote
\[
\mathbf t:= (t_{11}, t_{12}, t_{21}, t_{22}, t_{31}, t_{32}, t_{41}, t_{42})
\]
our vector of only eight variables.

The seven functionals of Eq.~\eqref{eq:extr_functionals} need to be perturbed in order to still be lattice functionals for $\Lambda(\mathbf t)$. To derive their exact form we first rewrite them (at $\mathbf t=0$) with respect to the lattice basis $\{p_4-p_1, p_2 -p_1, p_3-p_1\}$, and translate them to vanish at $p_1$. When this is done the functionals become the scalar product with the following vectors:
{\large
\begin{align}
\begin{array}{llll}
u_1=(1,1,1), &
u_2=(1,0,0), &
u_3=(0,0,1), &
u_4=(0,1,0),  \\
&u_5=(0,1,1), &
u_6=(1,0,1), &
u_7=(1,1,0).
\end{array}
\label{eq:u}
\end{align}
}

Now consider the $3\times 3$ matrix
\[
M (\mathbf t)
=
\left[
\begin{matrix}
p_4 (\mathbf t)-p_1 (\mathbf t) \\ p_2 (\mathbf t)-p_1 (\mathbf t) \\ p_3 (\mathbf t)-p_1 (\mathbf t)
\end{matrix}
\right]
\]
as a function of $\mathbf t$. The rows of $M$ form a basis for the perturbed linear lattice $\vec \Lambda(\mathbf t)=\Z^3 \cdot M(\mathbf t)$, so the columns of its inverse $N(\mathbf t):=  M(\mathbf t)^{-1}$ form the corresponding dual basis in $\vec \Lambda(\mathbf t)^* = M(\mathbf t)^{-1} \cdot \Z^3$.
Thus, the perturbed lattice functionals can be written as $M(t)^{-1} u_i$, for the vectors $u_i \in \Z^3$ displayed in \Cref{eq:u}.

The first six functionals attain their maximum and minimum value, for $\mathbf t = \mathbf 0$, at unique vertices of $\Delta$. By continuity, the perturbed functionals will attain their maximum and minimum at the same vertices, for any $\mathbf t$ close to $\mathbf 0$. That is, there is a neighborhood $U_f$ of $\mathbf 0$ such that the width of $\Delta$ with respect to the first six perturbed functionals equals
	\[
		f_i(\bt) := \width(\Delta, M(\bt)^{-1} u_i) = v_i M(\bt)^{-1} u_i,
	\]
where
{\large
\begin{align*}
\begin{array}{ll}
&v_1= a_1-a_4, \qquad
v_2=a_3-a_4, \qquad
v_3=a_2-a_3, \\
&v_4=a_1-a_2, \qquad
v_5=a_1-a_3, \qquad
v_6=a_2-a_4
\end{array}
\end{align*}
}%
are the unique vertices of the difference body $\Delta -\Delta$ where the width is attained for the respective $M(\bt)^{-1} u_i$, $i\in \{1,\dots,6\}$.
The width of $\Delta$ with respect to $M(\bt)^{-1} u_7$ is difficult to express because in $\mathbf0$ this functional attains its maximum at two of the vertices of $\Delta$ ($a_1$ and $a_3$) and its minimum at the other two ($a_2$ and $a_4$). In the rest of the paper we neglect this functional, which is no loss of generality.

Summing up, \Cref{theorem:main} follows from the following statement, which we prove in
\Cref{sec:neighborhood}:

\begin{theorem}
\label{theorem:6functionals}
The system of $6$ inequalities in eight variables
\[
f_i(\bt) \ge 2+\sqrt2, \qquad i\in \{1,\dots,6\}
\]
has an isolated solution at $\mathbf t=\mathbf 0$.
\end{theorem}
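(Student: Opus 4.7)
The plan is to run a Karush-Kuhn-Tucker style argument at $\mathbf{t} = \mathbf{0}$, where all six inequalities $f_i(\mathbf{t}) \ge 2+\sqrt{2}$ are active. I would first produce strictly positive multipliers $\lambda_1,\dots,\lambda_6 > 0$ with $\sum_{i=1}^6 \lambda_i \nabla f_i(\mathbf{0}) = 0$, then form the aggregated function
\[
F(\mathbf{t}) \;:=\; \sum_{i=1}^6 \lambda_i \bigl(f_i(\mathbf{t}) - (2+\sqrt{2})\bigr),
\]
and show that its Hessian at $\mathbf{0}$ is negative definite. Such a Hessian makes $\mathbf{0}$ a strict local maximum of $F$ with $F(\mathbf{0})=0$, so every $\mathbf{t}\ne\mathbf{0}$ close to $\mathbf{0}$ satisfies $F(\mathbf{t})<0$; because all $\lambda_i>0$, this forces at least one $f_i(\mathbf{t})<2+\sqrt{2}$, contradicting any hypothetical neighboring solution of the system.

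The first ingredient is an explicit description of the $6\times 8$ Jacobian $[\nabla f_i(\mathbf{0})]$. Since $f_i(\mathbf{t})=v_i M(\mathbf{t})^{-1}u_i$, the identity $\partial_{t_{jk}} M^{-1}=-M^{-1}(\partial_{t_{jk}} M)M^{-1}$ gives a closed formula for each entry, and the fact that $M(\mathbf{t})$ is affine in $\mathbf{t}$ means that all higher derivatives of $M$ itself vanish, which will streamline the second-order computation. Solving the homogeneous system $\sum_i \lambda_i \nabla f_i(\mathbf{0})=0$ then reduces to linear algebra over $\mathbb{Q}[\sqrt{2}]$. The existence of a strictly positive solution cannot be automatic---six generic vectors in $\mathbb{R}^8$ are linearly independent, hence admit no nontrivial dependence at all---but it is strongly suggested by the symmetry of $\Delta$: the stabilizer of the width-attaining configuration acts on the six functionals we retain, and one expects the symmetry to partition their gradients into a small number of orbits whose individual sums already vanish.

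The main work lies in verifying that the $8\times 8$ Hessian $H := \operatorname{Hess}F(\mathbf{0})$ is negative definite. Applying the inverse-matrix derivative identity twice gives the clean expression
\[
\partial^2_{t_a t_b}\bigl(v_i M(\mathbf{t})^{-1}u_i\bigr)\bigr|_{\mathbf{0}} \;=\; v_i M(\mathbf{0})^{-1}\bigl((\partial_a M)M(\mathbf{0})^{-1}(\partial_b M) + (\partial_b M)M(\mathbf{0})^{-1}(\partial_a M)\bigr)M(\mathbf{0})^{-1} u_i,
\]
producing a fully explicit $H$ with entries in $\mathbb{Q}[\sqrt{2}]$. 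The direct verification $H\prec 0$ can then proceed by Sylvester's criterion applied to $-H$, but the natural shortcut is to exploit the symmetry of $\Delta$ to block-diagonalize $H$ according to the isotypic components of the induced action on the $8$-dimensional parameter space, reducing the check to a handful of small blocks. The principal risk, and the most delicate part of the argument, is that $H$ could turn out only negative \emph{semi}-definite. In that event I would identify the kernel $K\subseteq\mathbb{R}^8$, intersect it with the first-order feasibility conditions $\nabla f_i(\mathbf{0})\cdot\mathbf{d}=0$, and prove strict decay of some $f_i$ along each surviving nonzero direction by a third- or fourth-order Taylor expansion of the corresponding constraint.
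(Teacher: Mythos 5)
Your overall KKT framing is the right one and matches the paper's strategy (positive multipliers do exist: the six gradients have rank $5$ with the unique positive dependence $(1,1,1,1,\sqrt2,\sqrt2)$ after clearing denominators by $\det M(\bt)$, and the symmetry heuristic you invoke is sound). The genuine gap is in your central step: the full $8\times 8$ Hessian $H$ of the aggregated function $F=\sum_i\lambda_i(f_i-(2+\sqrt2))$ at $\bo$ is \emph{not} negative definite, and the paper's own data shows this. Indeed, $H$ is a positive multiple of the quadratic part $\sum_i q_i$ of $\sum_i \lambda_i h_i$ (with $h_i=\det M\cdot(f_i-(2+\sqrt2))$), and the paper records that the form $c\sum_i q_i-\sum_i l_i^2$ ceases to be negative definite once $c\gtrsim 13.25$; if $\sum_i q_i$ were negative definite this form would be negative definite for every $c>0$. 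So $H$ is actually \emph{indefinite}, not merely semi-definite, and your fallback plan (find the kernel of $H$ and Taylor-expand to higher order along it) misdiagnoses the failure mode and would not rescue the argument.

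What is true, and what the paper verifies, is the standard second-order sufficient condition: $H$ restricted to the $3$-dimensional subspace $V=\{\mathbf d:\nabla f_i(\bo)\cdot\mathbf d=0,\ i=1,\dots,6\}$ is negative definite. Off $V$, the positive dependence forces some $\nabla f_i(\bo)\cdot\mathbf d<0$, so first-order information already kills feasibility. The remaining (and nontrivial) work is to glue these two regimes: for directions close to, but not in, $V$ the linear decrease of the offending $f_i$ is tiny and could a priori be overwhelmed by quadratic terms. Your sketch does not address this. The paper handles it by replacing $\sum_i\lambda_i h_i$ with $h=\sum_i(c-l_i)\lambda_i h_i$, whose quadratic part is $c\sum_i q_i-\sum_i l_i^2$: the term $-\sum_i l_i^2$ is negative definite transverse to $V$, and for small $c>0$ the whole form becomes negative definite on all of $\R^8$, after which a single strict-local-maximum argument finishes the proof. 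Alternatively a compactness argument on the unit sphere in $\R^8$ would work, but some such device is needed; as written, your proof has a hole exactly where the difficulty lies.
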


\section{A proof of local maximality}
\label{sec:neighborhood}

To prove \Cref{theorem:6functionals}, we want to show that for any $\mathbf t$ close to (but different from) $\mathbf 0$, we have
\[
f_i(\mathbf t) = v_i M(\mathbf t)^{-1} u_i <(2+\sqrt2)
\]
for some $i \in \{1, \dots 6\}$. Our proof involves some computations, that we have performed in exact arithmetics using \textit{SageMath} \cite{Sage}.

The entries of the matrix $M(\mathbf t)^{-1}$ are rational functions in $\mathbf t$.
In order to simplify computations we multiply both sides of our inequalities by $\det M(\mathbf t)$. This does not change their direction (in a neighborhood of $\mathbf 0$) since $\det M(\mathbf 0) = 16 >0$.
Thus, we define the following functionals
	\begin{align*}
		h_i(\mathbf t) := & \det M(\mathbf t) \cdot \left(f_i(\mathbf t)- (2+\sqrt2)\right)
		 \\ = & v_i M(\mathbf t)^{\#} u_i - (2 + \sqrt{2}) \det M(\mathbf t), \qquad i\in \{1,\dots,6\}.
	\end{align*}
Here $M(\mathbf t)^{\#} = \det M(\mathbf t) M(\mathbf t)^{-1}$
is the adjugate matrix, the transpose of the cofactor matrix. The entries of $M(\mathbf t)^{\#}$ are  polynomials of degree $2$ in $\mathbf t$, while the entries of $\det M(\mathbf t)$ are polynomials of degree  $3$ in $\mathbf t$. We thus see that $h_i(\mathbf t)$ is a polynomial of degree $3$ in $\mathbf t$, and our original inequalities $f_i(\mathbf t) \ge 2+ \sqrt2$ are (in a neighborhood of the origin) equivalent to the polynomial inequalitites $h_i(\mathbf t) \ge 0$.

The gradients of the functions $h_1 ,h_2,\dots,h_6$, evaluated at $\mathbf 0$, are:
\begin{align}
\small
\begin{array}{ll}
\nabla h_1(\mathbf 0) = 4(-1,1,-1,-2,0,0,-2,1) &+\ 2\sqrt2(-2,0,1,-1,0,0,-3,1) \\
\nabla h_2(\mathbf 0) = 4(-2,1,0,0,1,2,1,1) &+\ 2\sqrt2(-1,-1,0,0,1,3,0,2) \\
\nabla h_3(\mathbf 0) = 4(0,0,2,-1,1,-1,1,2) &+\ 2\sqrt2(0,0,3,-1,2,0,-1,1)\\
\nabla h_4(\mathbf 0) = 4(-1,-2,-1,-1,2,-1,0,0) \hspace{-5pt}\null&+\ 2\sqrt2(-1,-3,0,-2,1,1,0,0)\\
\nabla h_5(\mathbf 0) = 8(1,0,-1,0,-1,0,1,0) &+\ 8\sqrt2(1,0,0,0,-1,0,0,0)\\
\nabla h_6(\mathbf 0) = 8(0,1,0,1,0,-1,0,-1) &+\ 8\sqrt2(0,0,0,1,0,0,0,-1)
\end{array}
\label{eq:nablas}
\end{align}

These six vectors happen to have rank five, with the following positive linear dependence among them:
\begin{gather*}
    \nabla h_1(\mathbf 0)+\nabla h_2(\mathbf 0)+\nabla h_3(\mathbf 0)+\nabla h_4(\mathbf 0) + \sqrt2\big(\nabla h_5(\mathbf 0) + \nabla h_6(\mathbf 0)\big) =0.
\end{gather*}

In what follows we denote $(\lambda_1,\dots,\lambda_6) = (1,1,1,1,\sqrt2,\sqrt2)$ the coefficients in this dependence, and decompose the polynomials $\lambda_i h_i$  into their linear part (gradient),  quadratic part (Hessian) and cubic part. There is no part of degree zero since $h_i(\mathbf 0)=0$ by construction. That is to say:
	\[
		\lambda_i h_i (\mathbf t)= \underbrace{l_i(\mathbf t)}_{\text{linear}} + \underbrace{q_i(\mathbf t)}_{\text{quadratic}} + \underbrace{r_i(\mathbf t)}_{\text{cubic}}.
	\]
We now consider a positive constant $c\in \R_{> 0}$ (to be specified later) and define the function
	\begin{align*}
		h (\mathbf t)& =
		\sum_{i=1}^6  \left(c-\lambda_i \nabla h_i(\mathbf 0)\cdot \mathbf t\right)\ \lambda_i h_i (\mathbf t)
=		\sum_{i=1}^6 (c-l_i(\mathbf t)) (l_i(\mathbf t)+ q_i(\mathbf t) + r_i(\mathbf t)).
	\end{align*}

Observe that $\nabla h(\mathbf 0) = 0$, since:
\[
\nabla h(\mathbf 0) =  \sum_{i=1}^6 c \nabla l_i(\mathbf 0) = c  \sum_{i=1}^6 \lambda_i \nabla h_i(\mathbf 0) =0.
\]

\begin{lemma}
\label{lemma:small_c}
The Hessian $\nabla^2 h(\mathbf 0)$ is negative definite for any sufficiently small $c>0$.
\end{lemma}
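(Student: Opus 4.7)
My plan is to expand $h$ as a polynomial in $\mathbf t$ and read its quadratic part off directly. Writing $g_i := \lambda_i \nabla h_i(\mathbf 0)$ (so that $l_i(\mathbf t) = g_i \cdot \mathbf t$) and $H_i := \nabla^2(\lambda_i h_i)(\mathbf 0)$, the definition of $h$ gives
\[
h(\mathbf t) \;=\; c\sum_{i=1}^6 l_i(\mathbf t) \;+\; c\sum_{i=1}^6 q_i(\mathbf t) \;-\; \sum_{i=1}^6 l_i(\mathbf t)^2 \;+\; (\text{cubic and quartic terms}).
\]
The linear piece vanishes identically because $\sum_i\lambda_i\nabla h_i(\mathbf 0) = 0$ is exactly the dependence displayed just above, and reading off the quadratic part yields
\[
\nabla^2 h(\mathbf 0) \;=\; cA - B, \qquad A:=\sum_{i=1}^6 H_i, \quad B:=2\sum_{i=1}^6 g_i g_i^T.
\]

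The matrix $B$ is positive semidefinite, and by the rank computation already performed its kernel equals the $3$-dimensional subspace $V := \operatorname{span}(g_1,\dots,g_6)^\perp$, while $B$ is positive definite on $V^\perp$. Thus at $c=0$ the Hessian $-B$ is negative semidefinite with a $3$-dimensional zero eigenspace $V$. By standard perturbation reasoning, $cA-B$ becomes negative definite for small $c>0$ if and only if the restriction $P A P|_V$ (where $P$ is the orthogonal projection onto $V$) is negative definite. Indeed, decomposing $v=v_1+v_2$ with $v_1\in V$, $v_2\in V^\perp$ gives $v^T(cA-B)v = cv^T Av - v_2^T B v_2$; combining the lower bound $v_2^T B v_2 \ge \beta\|v_2\|^2$ (with $\beta$ the smallest positive eigenvalue of $B$) with an AM--GM estimate on the cross term $2cv_1^T A v_2$ shows that, provided $v_1^T (PAP) v_1 \le -\gamma\|v_1\|^2$ for some $\gamma>0$, the full form is negative for every $v\ne 0$ as soon as $c$ is small enough.

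The heart of the proof is therefore the explicit verification that $PAP|_V$ is negative definite. Since the $H_i$ are the Hessians at the origin of the explicit cubic polynomials $\lambda_i h_i$, and a basis $\{b_1,b_2,b_3\}$ of $V$ is obtained by elementary linear algebra from the six gradients in \eqref{eq:nablas}, one assembles the $3\times 3$ Gram matrix $\bigl(b_j^T A b_k\bigr)_{j,k}$ over $\mathbb Z[\sqrt 2]$ and checks its negative definiteness (e.g.\ via Sylvester's criterion applied to $-PAP|_V$). This is a concrete finite computation, entirely analogous to, and performable with the same SageMath machinery as, the rank computation already executed for the $g_i$.

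The main obstacle is precisely this $3\times 3$ negative definiteness check: it is the one place where non-structural arithmetic information has to be produced. Conceptually it says that, even along the three directions in which the linear constraints degenerate, the sum of the Hessians of the constraints curves downward. There is no obvious a~priori reason for this to be so, which is why it must be confirmed computationally; the rest of the argument is structural and robust in $c$.
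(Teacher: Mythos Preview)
Your approach is essentially identical to the paper's: both identify the quadratic part of $h$ as $c\sum_i q_i - \sum_i l_i^2$, observe that at $c=0$ it is negative semidefinite with kernel the $3$-dimensional subspace $V$, and reduce the claim to verifying that $\sum_i q_i$ (your $A$) restricted to $V$ is negative definite. The only differences are of emphasis: you spell out the perturbation argument for the reduction more carefully than the paper (which simply asserts it), whereas the paper actually completes the key $3\times3$ computation you only outline, giving an explicit parametrization of $V$ and the resulting restricted Hessian matrix and checking its negative definiteness.
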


\begin{proof}
The degree-two part of $h$ is
\[
\sum_{i=1}^6  (c q_i - l_i^2)=
c\sum_{i=1}^6 q_i  - \sum_{i=1}^6 l_i^2.
\]
For $c=0$ this equals $-\sum_{i=1}^6 l_i^2$, which is negative semi-definite with null-space equal to
\[
V = \left\{\mathbf v \in \R^8 :  \nabla h_i(\mathbf 0)\cdot \mathbf v = 0, i=1,\dots,6 \right\}.
\]
Thus, to prove the statement we only need to check that the quadratic form $\sum_{i=1}^6 q_i$ is negative definite when restricted to $V$. The following summarizes our computations in \textit{SageMath}, which prove that this is indeed the case.

The set $V$ is a three-dimensional linear subspace of $\R^8$ which admits the parametric form  $V = \{ \mathbf v (w_1,w_2,w_3) :  w_1,w_2,w_3 \in \mathbb R\}$, where
\begin{align*}
\mathbf v (w_1,w_2,w_3) := & \bigg( 1,0,0,0, \frac{\sqrt2}{2}, \frac{\sqrt2}{2}, -\frac{\sqrt2}{2}, \frac{\sqrt2-2}{2} \bigg) w_1 \\
 + & \bigg( 0,1,0, -\sqrt2, \frac{2-\sqrt2}{2}, \frac{2-\sqrt2}{2}, \frac{\sqrt2}{2}, \frac{2-3\sqrt2}{2} \bigg) w_2\\
 + & \bigg( 0,0,1, -1,1-\sqrt2, 1,0,-\sqrt2 \bigg) w_3.
\end{align*}

The Hessian of $\sum_{i=1}^6 q_i $ at $\mathbf 0$, restricted to the subspace $V$ and expressed in the coordinates $(w_1,w_2,w_3)$, turns out to be:
\[
\left[\
\sum_{i=1}^6
\
\begin{matrix}
{\partial^2  q_i(\mathbf v) }\\
\hline
{\partial w_j \partial w_k}
\end{matrix}
\ \ \right]_{j,k}=
\begin{bmatrix}
-\frac{19\sqrt2}{2} - 13 & -\frac{5\sqrt2}{2} - 4 &-\sqrt2 - 2\\
-\frac{5\sqrt2}{2} - 4 & -\frac{39\sqrt2}{2} - 27 & -16\sqrt2 - 22\\
-\sqrt2 - 2 & -16\sqrt2 - 22 & -19\sqrt2 - 26
\end{bmatrix},
\]
which is indeed negative definite.
\end{proof}

\begin{proof}[Proof of \Cref{theorem:6functionals}]
Let $c>0$ be such that the Hessian $\nabla^2 h(\mathbf 0)$ is negative definite. Such a $c$ exists by \Cref{lemma:small_c}.
Since $h(\mathbf 0)=0$ and $\nabla h (\mathbf 0) =0$, negative-definiteness of $\nabla^2 h(\mathbf 0)$ implies that there is a neighborhood $U_h$ of the origin such that $h$ is strictly negative in $U_h\setminus \{\mathbf 0\}$. On the other hand, there is another neighborhood $U_l$ of the origin in which all the multipliers $c- l_i$ are positive, since $c>0$ and $l_i(\mathbf 0) =0$.

Thus, for any $\mathbf t \in (U_h\cap U_l) \setminus \{\mathbf 0\}$ there is an $i$ such that
$\lambda_ih_i(\mathbf t) <0$; further intersecting with a neighborhood $U_M$ of $\mathbf 0$ where $\det M(\mathbf t) >0$, yields $f_i(\mathbf t) < 2+\sqrt2$ for any $\mathbf t \in (U_h\cap U_l \cap U_M) \setminus \{\mathbf 0\}$.
\end{proof}

\begin{remark}
This proof is a refinement of the Karush-Kuhn-Tucker (KKT) conditions for local optimality (see, for example, \cite[Theorem 14.19]{Griva_etal}).
We need this refined version as a preparation for the next section, where we compute an explicit neighborhood $U$ of $\bo$ such that the tetrahedron $\Delta$ is guaranteed to have width smaller than $2+\sqrt2$ with respect to $\Lambda(\bt)$ for every $\bt \in U \setminus \{\bo\}$.
\end{remark}

\section{An explicit neighborhood for maximality}
\label{sec:explicit}

We have seen that, among the lattices $\Lambda(\bt)$ for $\bt$ close to $\mathbf 0$, the width of $\Delta$ is largest with respect to $\Lambda(\mathbf 0)$. In this section, we construct an explicit neighborhood $U$ of the origin where this is achieved.

For this, consider the following system of strict inequalities in $\R^8$, where $i\in \{1,\dots,6\}$. In \eqref{width:comp:ineq},  $v$ runs over all vertices of $\Delta - \Delta$ other than $v_i$:
\begin{align}
\tag{i} \label{det:ineq} \det M(\bt) & > 0 &  & \text{(cubic)}\\
\tag{ii} \label{lin:bounds} l_i(\bt) & < c & & \text{(linear)}\\
\tag{iii} \label{width:comp:ineq} (v_i-v) M(\bt)^{\#} u_i & >0 &  & \text{(quadratic)}\\
\tag{iv} \label{hess:ineq} \nabla^2 h(\bt) & \phantom{>} \text{is negative definite} & & \text{(QMI)}
\end{align}

All these inequalities are satisfied at $\bt= \mathbf0$ when $c$ is chosen ``sufficiently small''. Hence, they are satisfied in a certain neighborhood $U$ of $\mathbf0$.
Our proof of \Cref{theorem:main} gives in fact the following stronger result:

\begin{theorem}
\label{thm:neighborhood}
If $c\in (0,\infty)$ and $U$ is a neighborhood of $\mathbf 0$ such that conditions {\rm (\ref{det:ineq})-(\ref{hess:ineq})} are met for every $\bt\in U$, then
\[
\width_{\Lambda(\mathbf 0)}(\Delta) > \width_{\Lambda(\mathbf t)}(\Delta)
\]
for every $\bt \in U \setminus\{\mathbf0\}$.
\end{theorem}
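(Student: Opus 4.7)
The strategy is to re-run the proof of \Cref{theorem:6functionals}, using the four explicit conditions (\ref{det:ineq})--(\ref{hess:ineq}) in place of the four implicit ``sufficiently small'' neighborhoods $U_M, U_l, U_f, U_h$ that appear there (they correspond respectively to (\ref{det:ineq}), (\ref{lin:bounds}), (\ref{width:comp:ineq}), (\ref{hess:ineq})). All the objects from that proof---the polynomials $h_i, l_i, q_i, r_i$, the multipliers $(\lambda_1,\dots,\lambda_6) = (1,1,1,1,\sqrt2,\sqrt2)$, and the auxiliary polynomial
\[
h(\bt) \;=\; \sum_{i=1}^6 \bigl(c-l_i(\bt)\bigr)\,\lambda_i\,h_i(\bt)
\]
---are re-used verbatim, and $h(\bo)=0$ together with $\nabla h(\bo)=\bo$ still holds unconditionally on $c$.

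The crux is to deduce $h(\bt)<0$ for every $\bt\in U\setminus\{\bo\}$ from condition (\ref{hess:ineq}). I first replace $U$ by its largest star-convex-at-$\bo$ sub-neighborhood (for instance by intersecting with an open ball centered at $\bo$); this is harmless since the claim is pointwise in $\bt$ and the remaining argument only uses the line segment $[\bo,\bt]$, which now lies in $U$. Taylor's theorem with integral remainder then gives
\[
h(\bt)\;=\;h(\bo)+\nabla h(\bo)\cdot\bt+\int_0^1(1-s)\,\bt^{\top}\nabla^2 h(s\bt)\,\bt\,ds\;=\;\int_0^1(1-s)\,\bt^{\top}\nabla^2 h(s\bt)\,\bt\,ds,
\]
which is strictly negative since by (\ref{hess:ineq}) the integrand is negative for every $s\in[0,1)$ whenever $\bt\neq\bo$.

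Having established $h(\bt)<0$, condition (\ref{lin:bounds}) makes every multiplier $c-l_i(\bt)$ positive, so at least one term $\lambda_i h_i(\bt)$ in the defining sum of $h$ must be negative. Since $\lambda_i>0$ this yields $h_i(\bt)<0$, and from the definition $h_i(\bt)=\det M(\bt)\bigl(f_i(\bt)-(2+\sqrt2)\bigr)$ together with (\ref{det:ineq}) we get $f_i(\bt)<2+\sqrt2$. Finally, (\ref{width:comp:ineq}) combined again with (\ref{det:ineq}) (which makes $M(\bt)^{\#}$ a positive multiple of $M(\bt)^{-1}$) ensures that $v_i$ is still the unique vertex of $\Delta-\Delta$ maximizing the functional $M(\bt)^{-1}u_i$, so $\width(\Delta,M(\bt)^{-1}u_i)=v_i M(\bt)^{-1}u_i=f_i(\bt)<2+\sqrt2=\width_{\Lambda(\bo)}(\Delta)$, and consequently $\width_{\Lambda(\bt)}(\Delta)<\width_{\Lambda(\bo)}(\Delta)$, as desired.

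The only subtle point is the reduction to a star-convex $U$, needed so that $s\bt\in U$ for every $s\in[0,1]$; everything else is bookkeeping that tracks how each of (\ref{det:ineq})--(\ref{hess:ineq}) replaces a continuity-and-compactness step of the original proof. The genuinely hard part is postponed to \Cref{sec:explicit}: certifying an explicit---and usefully large---neighborhood $U$ on which the quadratic matrix inequality (\ref{hess:ineq}) holds globally, as opposed to merely at $\bo$.
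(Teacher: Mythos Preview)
Your argument is essentially the paper's own: both proofs amount to ``re-run the proof of \Cref{theorem:6functionals}, now with conditions (\ref{det:ineq})--(\ref{hess:ineq}) playing the roles of $U_M,U_l,U_f,U_h$.'' You supply more detail than the paper does---in particular the Taylor formula with integral remainder is a clean way to pass from ``$\nabla^2 h$ negative definite on all of $U$'' to ``$h<0$ on $U\setminus\{\bo\}$,'' whereas the paper simply points back to the earlier proof.

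One small wrinkle: your move ``replace $U$ by its largest star-convex-at-$\bo$ sub-neighborhood \ldots\ this is harmless since the claim is pointwise'' does not quite do what you want. If $U$ is not already star-shaped at $\bo$, passing to a strict subset proves the conclusion only on that subset, not on all of $U\setminus\{\bo\}$ as the theorem asserts; the Taylor argument for a given $\bt$ genuinely needs $[\bo,\bt]\subset U$, which can fail. The paper's proof has exactly the same imprecision (it just says ``proceed as in the proof of \Cref{theorem:6functionals}''), and in the intended application $U$ is an $L_\infty$-ball, hence convex, so the issue never arises. If you want a statement valid for arbitrary open $U$, the clean fix is to add star-convexity at $\bo$ as a hypothesis.
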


\begin{proof}
We can proceed as in the proof of \Cref{theorem:6functionals}, where the role of all inequalities except \eqref{width:comp:ineq} is explained. These other inequalities are needed to guarantee that the width of $\Delta$ with respect to the lattice functional $M(\bt)^{\#} u_i$ is given by the functional $f_i(\bt)$, since $f_i$ evaluates the functional $M(\bt)^{\#} u_i$ at the particular pair of vertices of $\Delta$ given by the corresponding $v_i \in \Delta - \Delta$. This was mentioned in \Cref{sec:setting}, where a neighborhood satisfying these inequalities was called $U_f$. The other three (sets of) inequalities correspond to the neighborhoods $U_M$, $U_l$ and $U_h$ in the proof of \Cref{theorem:6functionals}.
\end{proof}

\subsection{A change of variables}
\label{subsec:bs}

For the computation of the explicit neighborhoods we introduce new variables that are better adapted to the problem. We define
$\sh{j}$, $\sv{j}$ for $j=1,\dots,4$ by setting
\[
\begingroup 
\setlength\arraycolsep{2pt}
\begin{bmatrix}
\sh{1}\\
\sv{1}\\
\sh{2}\\
\sv{2}\\
\sh{3}\\
\sv{3}\\
\sh{4}\\
\sv{4}
\end{bmatrix}
\!=\!
\frac{1}{4}\!
\begin{bmatrix}
\sqrt2 \!-\! 1 &             -1 &              0 &              0 &              0 &              0 &              0 &              0\\
            -1 & 1 \!-\! \sqrt2 &              0 &              0 &              0 &              0 &              0 &              0\\
             0 &              0 &              1 & \sqrt2 \!-\! 1 &              0 &              0 &              0 &              0\\
             0 &              0 & \sqrt2 \!-\! 1 &             -1 &              0 &              0 &              0 &              0\\
             0 &              0 &              0 &              0 & 1 \!-\! \sqrt2 &              1 &              0 &              0\\
             0 &              0 &              0 &              0 &              1 & \sqrt2 \!-\! 1 &              0 &              0\\
             0 &              0 &              0 &              0 &              0 &              0 &             -1 & 1 \!-\! \sqrt2\\
             0 &              0 &              0 &              0 &              0 &              0 & 1 \!-\! \sqrt2 &              1
\end{bmatrix}\!
\begin{bmatrix}
t_{11}\\
t_{12}\\
t_{21}\\
t_{22}\\
t_{31}\\
t_{32}\\
t_{41}\\
t_{42}
\end{bmatrix}.
\endgroup
\]

In what follows, we denote by
\[
\bs:= \big(\sh{1}, \sv{1}, \sh{2}, \sv{2}, \sh{3}, \sv{3}, \sh{4}, \sv{4} \big),
\]
the vector of the new variables. These new variables $\bs$ have two advantages over the (projected) Cartesian variables $\bt$ used so far.

On the one hand, they respect the symmetry of the problem. Our tetrahedron $\Delta$ (and the system of inequalities we want to study) is invariant under the rotary reflection $(x,y,z) \to (y,-x,-z)$ in the original Cartesian coordinates. Indeed, this map sends the points $a_i$ and $p_i$ to $a_{i+1}$ and $p_{i+1}$, for each $i\in \{1,2,3,4\}$ and with indices taken modulo four. In the coordinates $\bt$ this isometry maps
\[
\left[
\begin{matrix}
t_{11}& t_{12}\\ t_{21}& t_{22}\\ t_{31}& t_{32}\\ t_{41}& t_{42}
\end{matrix}
\right]
\mapsto
\left[
\begin{matrix}
t_{22}& -t_{21}\\ t_{32}& -t_{31}\\ t_{42}& -t_{41}\\ t_{12}& -t_{11}
\end{matrix}
\right],
\]
whereas in the $\bs$ coordinates it simply maps each pair $\big(\sh{j}, \sv{j} \big)$ to $\big(\sh{j+1}, \sv{j+1} \big)$.

On the other hand, the $\bs$ coordinates are closely related to the barycentric coordinates along the facets of $\Delta$. Consider for example the facet spanned by $a_2$, $a_3$ and $a_4$, containing the point $p_1(\bt)$. Every point in the hyperplane containing that facet can be thought of in the $\bt$ coordinates as a $(t_{11},t_{12}) \in \R^2$. The change $(t_{11},t_{12}) \mapsto \big(\sh{1}, \sv{1} \big)$ makes the vertices of the facet have the following coordinates:
\[
\begin{array}{ccccc}
&&(t_{11},t_{12})&& \big(\sh{1}, \sv{1} \big)\\
\hline
a_2 &=& (-\sqrt2, 2+\sqrt2) &\mapsto& (-1,0), \\
a_3 &=& (-2-\sqrt2, -\sqrt2) &\mapsto& (0,1), \\
a_4 &=& (\sqrt2, -2-\sqrt2) &\mapsto& (1,0).
\end{array}
\]
That is to say: if $(b_2,b_3,b_4)$ are the barycentric coordinates of a point in that hyperplane with respect to the affine basis $(a_2,a_3,a_4)$ then we have that
\begin{equation}
\label{eq:barycentric}
\sv{1} = b_3 = \frac12 +\frac{t_{13}}{4+2\sqrt2},
\qquad \text{and} \qquad
\sh{1} = b_4-b_2.
\end{equation}
This explains the notation $\sh{}$ and $\sv{}$:
$\sv{}$ is constant along horizontal lines in the facet, and $\sh{}$ is constant along ``steepest'' lines in the facet, see Figure~\ref{fig:s-coordinates}.

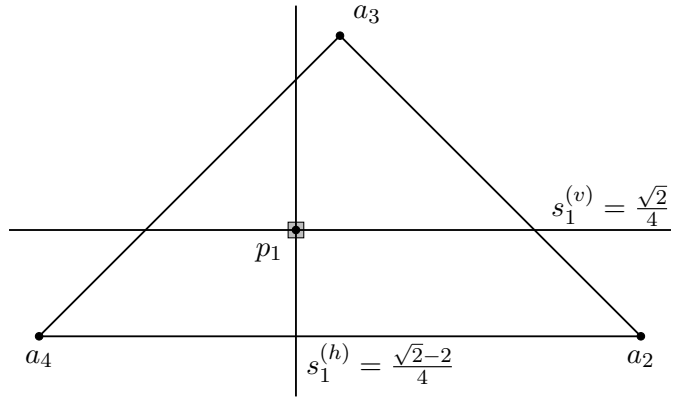
\begin{figure}[ht!]
\begin{tikzpicture}[bnode/.style={circle, inner sep=1.5pt, fill=black},
wnode/.style={circle, inner sep=1pt, fill=black},
scale = 4]
\clip(-1.2,-0.2) rectangle (1.2,1.2);
\draw[line width=0.7pt] (-1.,0.) -- (1.,0.) -- (0.,1.) -- cycle;
\draw[fill=lightgray] (-0.1724,0.3276) -- (-0.1724,0.3796) -- (-0.1204,0.3796) -- (-0.1204,0.3276) -- cycle;
\draw[line width=0.7pt] (-0.1464,-0.2)-- (-0.1464,1.1);
\draw[line width=0.7pt] (-1.1, 0.3536)-- (1.1, 0.3536);
\node[draw,wnode, label=below left:{$p_1$}] () at (-0.1464, 0.3536) {};
\node[draw,wnode, label=below:{$a_4$}] () at (-1, 0) {};
\node[draw,wnode, label=below:{$a_2$}] () at (1, 0) {};
\node[draw,wnode, label=above right:{$a_3$}] () at (0, 1) {};
\node[label=above:{$\sv{1}=\frac{\sqrt2}{4}$}] () at (0.9, 0.30) {};
\node[label=right:{$\sh{1}=\frac{\sqrt2-2}{4}$}] () at (-0.18, -0.09) {};
\end{tikzpicture}
\caption{An illustration of the coordinates $\sh{1}$ and $\sv{1}$. The gray square represents the $L_\infty$-ball of radius $0.02614$, the bound in \Cref{thm:explicit}}
\label{fig:s-coordinates}
\end{figure}

We are going to abuse notation and keep the notation of conditions (\ref{det:ineq})-(\ref{hess:ineq}) for the corresponding conditions in the new variables. Thus, for example, we write $M(\bs)$ for $M(\bt(\bs))$, and we write $\nabla^2 h(\bs)$ for
$\left(\partial^2 h(\bt(\bs)) / \partial \bt^2\right)$ (the Hessian of $h$ with derivatives with respect to the $\bt$ coordinates, but expressed in the $\bs$ coordinates). In \Cref{sec:det:ineq,sec:width:comp:ineq,sec:lin:bounds,sec:hess:ineq} we look separately at the four (sets of) inequalities and find that they are satisfied in the following neighborhoods, all expressed as $L_\infty$ balls in the $\bs$ coordinates:

\begin{itemize}
\item[(i)] is a single inequality involving a polynomial of degree three; in \Cref{sec:det:ineq}
(\Cref{coro:determinant}), using a geometric argument, we show that it holds for every $\bs$ with  $\|\bs\|_\infty < \frac14(\sqrt2-1) \approx 0.1036$.
\item[(ii)] are six linear inequalities; \Cref{sec:lin:bounds} is devoted to proving that, taking $c=9.75$, these are satisfied
for $\|\bs\|_\infty < 0.02614$.
\item[(iii)] consists of $6 \times 11$ inequalities (since $\Delta - \Delta$ has $12$ vertices); these are simultaneously verified when $\|\bs\|_\infty <  0.04423$, as shown in \Cref{sec:width:comp:ineq} (\Cref{coro:quadratic}).
\item[(iv)] is a strict quadratic matrix inequality (QMI); in \Cref{sec:hess:ineq} we show that, for the same $c=9.75$, it holds when $\|\bs\|_\infty <   0.02646$.
	\end{itemize}

\begin{remark}
The inequalities \eqref{lin:bounds} and \eqref{hess:ineq}, hence the bound obtained from them,  depend on $c$. Moreover, these are the two inequalities were our bounds are worse.
The dependence on $c$ for the bound in \eqref{lin:bounds} is obviously proportional to $c^{-1}$ and we can compute it quite explicitly (see \Cref{coro:linear}). For \eqref{hess:ineq}, we have computed the bound for several values of $c$, finding that $c=9.75$ is (very close to) the optimum. See \Cref{rem:why-9.75} for more details on this.
\label{rem:c-dependence}
\end{remark}

This allows us to conclude:
\begin{theorem}
\label{thm:explicit}
The width of $\Delta$ with respect to $\Lambda(\mathbf 0)$ is strictly larger than with respect to any other lattice $\Lambda(\bs)$ with $\bs\ne \bo$ and $\|\bs\|_\infty <0.02614$.
\end{theorem}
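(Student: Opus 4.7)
My plan is to apply \Cref{thm:neighborhood} to an explicit $L_\infty$ ball in the $\bs$ coordinates. Concretely, I would fix a positive parameter $c$, choose a radius $\rho > 0$, set $U = \{\bs : \|\bs\|_\infty < \rho\}$, and verify that conditions (\ref{det:ineq})--(\ref{hess:ineq}) hold simultaneously on $U$; \Cref{thm:neighborhood} then yields $\width_{\Lambda(\bo)}(\Delta) > \width_{\Lambda(\bs)}(\Delta)$ for every $\bs \in U\setminus\{\bo\}$. The coordinate change from $\bt$ to $\bs$ is the enabler here: the symmetry of $\Delta$ reduces four facets to a single pattern repeated four times, and the barycentric interpretation (via \eqref{eq:barycentric}) means that an $L_\infty$ ball in $\bs$ is a genuinely geometric object on each facet, making the four estimates directly comparable.

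I would then handle the four conditions separately, each producing its own admissible $\rho$; the final radius is the minimum. For (\ref{det:ineq}), a single cubic inequality $\det M(\bs) > 0$, the barycentric interpretation of $\bs$ should give a generous bound on the order of $\tfrac{\sqrt2-1}{4}\approx 0.1036$ via a pure continuity/positivity argument, since $\det M(\bo)=16$ and the perturbed points stay inside their facets. For (\ref{width:comp:ineq}), a finite list of quadratic strict inequalities asserting that each $v_i$ remains the unique optimizer of $M(\bs)^{\#} u_i$ on $\Delta - \Delta$, a uniform entrywise bound on the adjugate should yield a radius around $0.04423$. For (\ref{lin:bounds}), a set of six linear inequalities $l_i(\bs) < c$, the admissible radius is visibly proportional to $c$ via the maximal coefficient magnitudes of the $l_i$.

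The main obstacle will be the QMI (\ref{hess:ineq}). At $\bs = \bo$, negative definiteness is already granted by \Cref{lemma:small_c}, but to propagate it to an explicit ball I must bound the perturbation $\nabla^2 h(\bs) - \nabla^2 h(\bo)$ in operator norm against the smallest absolute eigenvalue of $\nabla^2 h(\bo)$. I plan to use conservative but rigorous tools --- Gershgorin-type row-sum bounds, with all entries (polynomials of degree one in $\bs$, with coefficients in $\Z[\sqrt2]$) handled in exact arithmetic in \emph{SageMath}. A further subtlety is that $h$ depends on $c$ through the factors $c - l_i(\bs)$, so the admissible radius in (\ref{hess:ineq}) is \emph{not} monotone in $c$: larger $c$ loosens (\ref{lin:bounds}) (radius $\propto c$) but tightens (\ref{hess:ineq}) (since $c q_i$ scales the quadratic part of $h$). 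I would therefore sweep $c$ numerically and expect the balance to occur near $c = 9.75$, giving radii of about $0.02614$ and $0.02646$ for (\ref{lin:bounds}) and (\ref{hess:ineq}) respectively. The smallest of the four resulting bounds, $0.02614$ from (\ref{lin:bounds}), is then the stated neighborhood radius, and by \Cref{rem:c-dependence} this optimum over $c$ is essentially sharp within the present method.
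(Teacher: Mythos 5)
Your overall architecture is exactly the paper's: instantiate \Cref{thm:neighborhood} on an $L_\infty$ ball in the $\bs$ coordinates, bound each of the four conditions separately, take the minimum radius, and tune $c$ so that the two $c$-dependent bounds (\ref{lin:bounds}) and (\ref{hess:ineq}) cross near $c=9.75$. The treatment of (\ref{det:ineq}) and (\ref{lin:bounds}) matches the paper essentially verbatim. The one genuine divergence is the QMI (\ref{hess:ineq}): the paper does \emph{not} use an eigenvalue-perturbation/Gershgorin argument. Instead it observes that on a connected set where $\det \nabla^2 h(\bs)$ never vanishes and which contains one negative definite matrix, all matrices are negative definite; it then certifies nonvanishing of this determinant (a degree-$16$ polynomial in $8$ variables) via \Cref{thm:root_pos_poly}, a Descartes-rule criterion on coefficients that is also what the paper uses for the $66$ quadratics in (\ref{width:comp:ineq}). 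Your route is legitimate in principle and possibly cheaper than the paper's $14$-hour exact computation, but two caveats: first, the entries of $\nabla^2 h(\bs)$ are \emph{quadratic}, not linear, in $\bs$ (since $h$ has degree $4$); second, and more importantly, a Gershgorin row-sum bound is typically more conservative than the paper's determinant criterion, so you have no guarantee of reaching radius $0.02646$ for (\ref{hess:ineq}) --- if your bound drops below $0.02614$, condition (\ref{hess:ineq}) rather than (\ref{lin:bounds}) becomes binding, and the theorem with the constant $0.02614$ does not follow without re-optimizing $c$ (which may then yield a smaller final radius). So the proposal proves \emph{a} theorem of this form, but whether it proves the stated constant depends on a computation you have only conjectured the outcome of. Also, your remark that the admissible radius for (\ref{hess:ineq}) is ``not monotone in $c$'' is slightly off: the paper's data show it is monotone decreasing on the tested range; the relevant point is simply that it decreases while the (\ref{lin:bounds}) radius increases linearly, forcing a crossing.
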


\begin{proof}[Proof of \Cref{thm:barycentric}]
Let $p'_1, p'_2, p'_3, p'_4$ be points in the corresponding facets of $\Delta$. The relation between barycentric coordinates and $\bs$ coordinates expressed in \eqref{eq:barycentric} implies that changing barycentric coordinates by at most $\varepsilon$, the $\bs$ coordinates change by at most $2\varepsilon$.
\end{proof}

\subsection{The determinant condition (\ref{det:ineq})}
\label{sec:det:ineq}

Let us first compute the $\bs$-coordinates of the initial lattice points $p_i = p_i(\bo)$; we do this for $p_1$, but the result is the same for the other points, by symmetry. We have that $p_1=(-1,-1,-1)$ so that $t_{11}=t_{12}=-1$ and
\[
\begingroup 
\setlength\arraycolsep{2pt}
\begin{bmatrix}
\sh{1}\\
\sv{1}
\end{bmatrix}
\!=\!
\frac{1}{4}\!
\begin{bmatrix}
\sqrt2 \!-\! 1 &             -1\\
            -1 & 1 \!-\! \sqrt2
\end{bmatrix}\!
\begin{bmatrix}
-1\\
-1
\end{bmatrix}
\!=\!
\frac{1}{4}\!
\begin{bmatrix}
             2 \!-\!  \sqrt2 \\
             \sqrt2
\end{bmatrix}
\approx
\begin{bmatrix}
0.15 \\
0.35
\end{bmatrix}.
\endgroup
\]

For each $i\in \{1,2,3,4\}$, let $F_i$ be the facet of $\Delta$ containing $p_i = p_i(\bo)$ and let $T_i$ be the open triangle with vertices at the mid-points of edges of $F_i$. In barycentric coordinates, $T_i$ is the set of points with all coordinates less than 1/2.
By what we said before, the $\bs$ coordinates that we have got for $p_1$ correspond to the barycentric coordinates $\frac14(1,\sqrt2,3-\sqrt2)\approx (0.25, 0.35, 0.40)$. Thus, the point $p_i$ is in $T_i$.

\begin{lemma}
\label{lemma:determinant}
Every $4$-tuple $(q_1,q_2,q_3,q_4)\in T_1\times T_2\times T_3\times T_4$ has the same orientation (i.e., the same sign of the determinant) as $(p_1,p_2,p_3,p_4)$.
\end{lemma}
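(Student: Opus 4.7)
My plan is to exploit the affine multilinearity of the signed volume
\[
V(q_1,q_2,q_3,q_4) := \det(q_2-q_1,\, q_3-q_1,\, q_4-q_1),
\]
whose sign records the orientation of $(q_1,q_2,q_3,q_4)$. Since $T_1\times T_2\times T_3\times T_4$ is a product of open triangles (hence connected) and $V$ is continuous, it suffices to show $V$ never vanishes on this product: the orientation will then be constant on the domain and, by evaluating at $(p_1,\dots,p_4)$, one computes directly that $V(p_1,\dots,p_4) = 16 > 0$, pinning down the sign.

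Each open medial triangle $T_i$ is the strict relative interior of the convex hull of the three edge-midpoints of $F_i$. Writing $q_i = \sum_m w_i^m\, m$ as a convex combination of these three midpoints with strictly positive weights $w_i^m > 0$, affine multilinearity of $V$ in each argument yields
\[
V(q_1,\dots,q_4) = \sum_{(m_1,\dots,m_4)} \Big(\prod_{i=1}^4 w_i^{m_i}\Big)\, V(m_1,m_2,m_3,m_4),
\]
summed over the $3^4 = 81$ tuples in which $m_i$ is a vertex of $T_i$. Each weight product is strictly positive, so it suffices to verify that every $V(m_1,\dots,m_4)$ appearing is non-negative, with at least one strictly positive.

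The six edge-midpoints of $\Delta$ form the vertices of a centrally symmetric octahedron $O$, whose three antipodal pairs are exactly the midpoints of opposite edges of $\Delta$. I would classify the $81$ tuples into three types. \textbf{Type (a):} two of the $m_i$ coincide---this is possible because each midpoint $m_{jk}$ is a vertex of exactly two of the triangles $T_\ell$ (those with $\ell\notin\{j,k\}$)---and so $V=0$. \textbf{Type (b):} the four midpoints are distinct but the pair \emph{excluded} from the tuple is antipodal in $O$; the four chosen midpoints then all lie in the plane through the origin perpendicular to the excluded diameter, and so $V=0$. \textbf{Type (c):} the four midpoints are distinct and the excluded pair is non-antipodal, in which case the chosen set spans a genuine $3$-simplex inscribed in $O$. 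For Type (c), using the cyclic rotary-reflection symmetry $R\colon(x,y,z)\mapsto(y,-x,-z)$ of $\Delta$ (which permutes the facets $F_i$ and hence the medial triangles $T_i$), the $12$ Type-(c) tuples partition into $3$ orbits of size $4$, and a direct computation on one representative of each orbit shows that $V(m_1,\dots,m_4) = 24+16\sqrt{2} > 0$ uniformly. Hence all terms in the multilinear expansion are non-negative and many are strictly positive, so $V(q_1,\dots,q_4) > 0$ throughout $T_1\times\cdots\times T_4$.

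The main obstacle is the Case-(c) verification: one must carefully enumerate the orbits of the cyclic symmetry on vertex-tuples and confirm that each orbit representative yields the same positive value of $V$. A brute-force check of the $81$ tuples is possible, but the combination of the cyclic symmetry of $\Delta$ and the combinatorics of antipodal pairs in $O$ reduces the essential verification to just three direct $3\times 3$ determinant computations.
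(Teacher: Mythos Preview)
Your approach is valid and leads to a correct proof, but it differs substantially from the paper's. The paper argues by contradiction and without any computation: assuming a coplanar tuple $(q_1,\ldots,q_4)$ exists, it writes an affine dependence $\sum \mu_i q_i = 0$, observes that exactly two of the $\mu_i$ are positive and two negative (since no $q_i$ lies in the convex hull of the other three), and then evaluates on this dependence the ``bisecting'' affine functional $f_{ij}$ (taking value $-1$ at $a_i,a_j$ and $+1$ at $a_k,a_l$). Because each $q_r$ lies in the open medial triangle $T_r$, one has $f_{ij}(q_r)>0$ exactly for $r\in\{i,j\}$, which forces the dependence to evaluate to a strictly positive number, contradicting $0=0$. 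This argument is coordinate-free and works for an arbitrary tetrahedron.

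Your multilinear expansion is a genuinely different route, and the case split (a)/(b)/(c) is the right one. There is, however, a counting slip in Type~(c): there are $24$ such ordered tuples, not $12$. For each of the $12$ non-antipodal excluded pairs of midpoints there are exactly \emph{two} admissible assignments to the slots $T_1,\ldots,T_4$, and under the order-$4$ action of $R$ (which indeed preserves $V$: the slot $4$-cycle is odd and $\det R=-1$, so the two sign flips cancel) these $24$ tuples form $6$ orbits, not $3$. The gap is easy to close: the two tuples arising from the same excluded pair differ by a $3$-cycle on three of the four slots, an even permutation, so they have the same $V$. Hence three determinant evaluations really do suffice, and each of them does give $24+16\sqrt 2$. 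Once this extra observation is supplied your argument is complete; what it costs, compared to the paper's, is those explicit determinant computations, while what it buys is an explicit positive lower bound for $V$ on the whole product $T_1\times\cdots\times T_4$.
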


\begin{proof}
The result follows immediately if we prove that no $(q_1,q_2,q_3,q_4)\in T_1\times T_2\times T_3\times T_4$ produces a coplanar 4-tuple. To show this, suppose by way of contradiction that it does, so that there are coplanar $q_i\in T_i$, $i=1,2,3,4$.

First observe that the fact that the $q_i$'s lie in the respective $T_i$'s implies that, for each choice of $(i,j) \in \binom{[4]}2$ there is an affine functional $f_{ij}$ that is positive on $q_i$ and $q_j$ and negative on the other two points $q_k$ and $q_l$. Indeed, this is the functional that bisects $\Delta$ taking value $1$ at the vertices $a_k$ and $a_l$ and $-1$ at $a_i$ and $a_j$.

On the other hand, coplanarity implies the existence of an affine dependence
\[
\mu_1 q_1 + \mu_2 q_2 + \mu_3 q_3 + \mu_4 q_4 =0, \qquad \text{with }
\mu_1  + \mu_2  + \mu_3  + \mu_4  =0.
\]
Now, since the $q_i$'s lie in the relative interior of different facets of $\Delta$, none of them is in the convex hull of the rest. This implies that two of the $\mu_r$'s, say $\mu_i$ and $\mu_j$, are positive and the other two are negative.  Then the contradiction is that
\begin{align*}
0 &= f_{ij}(\mu_1 q_1 + \mu_2 q_2 + \mu_3 q_3 + \mu_4 q_4) \\
&= \mu_1 f_{ij}(q_1) + \mu_2 f_{ij}(q_2) + \mu_3 f_{ij}(q_3) + \mu_4 f_{ij}(q_4) >0.
\qedhere
\end{align*}
\end{proof}

\begin{corollary}
\label{coro:determinant}
If $\bs \in \R^8$ is chosen such that $\|\bs\|_\infty < \frac14(\sqrt2-1) \approx 0.1036$ then the determinants $\det M(\mathbf 0)$ and $\det M(\bs)$ have the same sign.
\end{corollary}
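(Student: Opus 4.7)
The plan is to deduce the corollary directly from \Cref{lemma:determinant}: I will show that if $\|\bs\|_\infty < \tfrac14(\sqrt2-1)$ then $p_i(\bs) \in T_i$ for each $i \in \{1,2,3,4\}$, after which the lemma forces $(p_1(\bs),\dots,p_4(\bs))$ to have the same orientation as $(p_1,\dots,p_4)$. Since $\det M(\bs)$ is (up to a fixed nonzero factor coming from the definition of $M$) the oriented volume spanned by $p_2(\bs)-p_1(\bs), p_3(\bs)-p_1(\bs), p_4(\bs)-p_1(\bs)$, the signs of $\det M(\bs)$ and $\det M(\bo)$ then coincide.

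The main step is to translate the $L_\infty$-bound on $\bs$ into a bound on the barycentric coordinates of each $p_i(\bs)$. By the rotary-reflection symmetry of Section \ref{subsec:bs} (which permutes the pairs $(\sh{i},\sv{i})$ cyclically and likewise permutes the triangles $T_i$), it suffices to treat $p_1$. From \eqref{eq:barycentric} and $b_2+b_3+b_4=1$ we obtain
\[
b_3 = \sv{1}, \qquad b_4 = \tfrac12\bigl(1 - \sv{1} + \sh{1}\bigr), \qquad b_2 = \tfrac12\bigl(1 - \sv{1} - \sh{1}\bigr).
\]
Each $b_k$ is an affine function of $(\sh{1},\sv{1})$ whose gradient has $L_1$-norm equal to $1$, so changing $(\sh{1},\sv{1})$ by at most $r$ in $L_\infty$-norm changes each barycentric coordinate of $p_1(\bs)$ by at most $r$.

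At $\bs=\bo$ we computed already that the barycentric coordinates of $p_1$ are $\tfrac14(1,\sqrt2, 3-\sqrt2)$; the largest of these is $(3-\sqrt2)/4$, so the gap to the threshold value $1/2$ that defines $T_1$ is exactly $\tfrac14(\sqrt2-1)$. Therefore, whenever $\|\bs\|_\infty < \tfrac14(\sqrt2-1)$, every barycentric coordinate of $p_1(\bs)$ remains strictly below $1/2$, i.e.\ $p_1(\bs)\in T_1$. By the symmetry the same holds for $p_2,p_3,p_4$, and \Cref{lemma:determinant} finishes the proof.

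I do not anticipate any real obstacle here: the only nontrivial ingredient is \Cref{lemma:determinant}, and the remaining work is a direct affine comparison in which the bound $\tfrac14(\sqrt2-1)$ arises as exactly the slack between the $\bo$-value of the largest barycentric coordinate of $p_1$ and the boundary of $T_1$. The one item to double-check is that the symmetry argument is clean: this is immediate because the matrix defining $\bs$ in terms of $\bt$ is block-diagonal with one $2\times2$ block per facet, each block being intertwined by the rotary reflection with the next.
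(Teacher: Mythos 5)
Your proposal is correct and follows essentially the same route as the paper: both reduce to \Cref{lemma:determinant} and verify that the open $L_\infty$-ball of radius $\frac14(\sqrt2-1)$ around $p_1$ in the $(\sh{1},\sv{1})$-plane stays inside $T_1$, invoking the cyclic symmetry for the other facets. The only cosmetic difference is that the paper checks the four corners of the ball against the facet inequalities of $T_1$ directly, while you pass to barycentric coordinates and use an $L_1$-gradient bound; the two computations are equivalent, and both identify $b_4<\tfrac12$ (i.e.\ $\sv{1}-\sh{1}>0$) as the tight constraint giving the radius $\frac14(\sqrt2-1)$.
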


\begin{proof}
We simply need to check that the open $\infty$-ball around, say, $p_1$ and of radius $\frac14(\sqrt2-1)$ does not intersect the boundary of the triangle $T_1$. In the coordinates $\big(\sh{1},\sv{1}\big)$ the vertices of $T_1$ are $(0,0)$, $\frac12(-1,1)$ and $\frac12(1,1)$, so that its facet description is
\[
\sv{1} <\frac12,
\qquad
\sv{1} +\sh{1} >0,
\qquad
\sv{1} -\sh{1} >0.
\]
The vertices of the $\infty$-ball in the statement have
\[
\big(\sh{1},\sv{1}\big) = \frac{1}{4} \big(2 - \sqrt2 \pm (\sqrt2-1), \ \sqrt2 \pm(\sqrt2-1) \big),
\]
and the four of them satisfy the three non-strict inequalitites. (Strictness is not needed since the ball is open).
\end{proof}

\subsection{The linear inequalities (\ref{lin:bounds})}
\label{sec:lin:bounds}

To address the inequalities $l_i(\bs) \le c$ let us first explicitly write the linear functions $l_i$ in the original $\bt$ coordinates. They are essentially the same as the gradients in \Cref{eq:nablas}, except they have to be multiplied respectively by the coefficients  $(\lambda_1,\dots,\lambda_6) = (1,1,1,1,\sqrt2,\sqrt2)$:
\begin{align*}
\begin{array}{ll}
 l_1(\bt) = \bt \cdot \big(4(-1,1,-1,-2,0,0,-2,1) &+\ 2\sqrt2(-2,0,1,-1,0,0,-3,1)\big)  \\
 l_2(\bt) = \bt \cdot \big(4(-2,1,0,0,1,2,1,1) &+\ 2\sqrt2(-1,-1,0,0,1,3,0,2) \big)  \\
 l_3(\bt) = \bt \cdot \big(4(0,0,2,-1,1,-1,1,2) &+\ 2\sqrt2(0,0,3,-1,2,0,-1,1)\big)  \\
 l_4(\bt) = \bt \cdot \big(4(-1,-2,-1,-1,2,-1,0,0) \hspace{-5pt}\null&+\ 2\sqrt2(-1,-3,0,-2,1,1,0,0)\big)  \\
 l_5(\bt) = \bt \cdot \big(16(1,0,0,0,-1,0,0,0)  &+\ 8\sqrt2(1,0,-1,0,-1,0,1,0)\big)  \\
 l_6(\bt) = \bt \cdot \big(16(0,0,0,1,0,0,0,-1) &+\ 8\sqrt2(0,1,0,1,0,-1,0,-1) \big)
\end{array}
\end{align*}

From here, we obtain their expression in the $\bs$ coordinates, multiplying by the inverse change of coordinates. Observe that, as expected, these coordinates highlight the symmetry of the problem, acting as a shift of  the $\bs$ coordinates by two places:
\begin{align*}
\arraycolsep1pt
\renewcommand\arraystretch{1.2}
\begin{array}{ll}
	\text{ \normalsize $l_1(\bs) \!= \ 8\bs \cdot$}
		\big(  (-2, 2, -1, 3,\ \ 0, 0,\ \ 3, 3)     & \ +\   \sqrt2 (-1, 1, -1, 1,\ \ 0, 0,\ \ 2, 2) \big)  \\
	\text{ \normalsize $l_2(\bs) \!= \ 8\bs \cdot$}
		\big(  (-1, 3,\ \ 0, 0,\ \ 3, 3, -2, 2)     & \ +\    \sqrt2 (-1, 1,\ \ 0, 0,\ \ 2, 2, -1, 1) \big)  \\
	\text{ \normalsize $l_3(\bs) \!= \ 8\bs \cdot$}
		\big(  (\ \ 0, 0,\ \ 3, 3, -2, 2, -1, 3)     & \ +\   \sqrt2 (\ \ 0, 0,\ \ 2, 2, -1, 1, -1, 1) \big)  \\
	\text{ \normalsize $l_4(\bs) \!= \ 8\bs \cdot$}
		\big(  (\ \ 3, 3, -2, 2, -1, 3,\ \ 0, 0)     & \ +\   \sqrt2 (\ \ 2, 2, -1, 1, -1, 1,\ \ 0, 0) \big)  \\
	\text{ \normalsize $l_5(\bs) \!=\! 16\bs \cdot$}
		\big(  ( 1,-3, -1,-1,  1,-3, -1,-1)     & \ +\   \sqrt2 ( 1,-2, -1, 0,  1,-2, -1, 0)\big)  \\
	\text{ \normalsize $l_6(\bs) \!=\! 16\bs \cdot$}
		\big(  (-1,-1,  1,-3, -1,-1,  1,-3)     & \ +\   \sqrt2 (-1, 0,  1,-2, -1, 0,  1,-2)\big)  \\
\end{array}
\end{align*}

Recall that the $L_\infty$ distance of a hyperplane  $\sum_i a_i x_i = c$ to the origin is simply
$|c|/\sum_i |a_i|.$
In our case, the sum of absolute values of coefficients is $112 + 64\sqrt2 \approx 202.51$ for $l_1,\dots,l_4$ and it is $192 + 128\sqrt2\approx 373.02$ for $l_5$ and $l_6$. Thus:

\begin{corollary}
\label{coro:linear}
If $c\in (0,\infty)$ and $\bs \in \R^8$ is chosen such that $\|\bs\|_\infty < c / (128\sqrt2+192)$, then the
six inequalities $l_i(\bs) < c$ are satisfied.
\end{corollary}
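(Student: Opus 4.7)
The plan is to apply the elementary duality between the $L_\infty$ and $L_1$ norms: for any linear functional $l(\bs) = \sum_j a_j s_j$ on $\R^8$, we have $|l(\bs)| \le \|\bs\|_\infty \cdot \sum_j |a_j|$. In particular, if $\|\bs\|_\infty < c/\sum_j|a_j|$, then $l(\bs) < c$. So I would first bound each $l_i(\bs)$ individually, then take the worst (largest) denominator among the six bounds.

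The second step is to read off $\sum_j|a_j|$ for each of the six explicit linear functionals displayed just above the corollary. For $i \in \{1,2,3,4\}$, the rational part of the coefficient vector has entries summing in absolute value to $14$ (scaled by $8$, giving $112$), and the $\sqrt 2$ part has entries summing in absolute value to $8$ (scaled by $8$, giving $64\sqrt 2$); hence $\|a^{(i)}\|_1 = 112 + 64\sqrt 2$. For $i \in \{5,6\}$, the rational part sums to $12$ (scaled by $16$, giving $192$) and the $\sqrt 2$ part sums to $8$ (scaled by $16$, giving $128\sqrt 2$); hence $\|a^{(i)}\|_1 = 192 + 128\sqrt 2$. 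The symmetry of the problem (the cyclic shift of $\bs$ coordinates by two places that permutes $l_1,\dots,l_4$ and swaps $l_5,l_6$) makes it enough to verify just one case within each group.

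Third, observe that $192 + 128\sqrt 2 > 112 + 64\sqrt 2$, so the common denominator $128\sqrt 2 + 192$ in the hypothesis is the one that makes all six inequalities hold simultaneously. Concretely, if $\|\bs\|_\infty < c/(128\sqrt 2 + 192)$, then
\[
 l_i(\bs) \le |l_i(\bs)| \le \|\bs\|_\infty \cdot \|a^{(i)}\|_1 \le \|\bs\|_\infty \cdot (128\sqrt 2 + 192) < c
\]
for every $i \in \{1,\dots,6\}$, which is exactly the desired conclusion.

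There is no real obstacle here: the argument is purely the $\ell_\infty$--$\ell_1$ Hölder inequality, and the only computational content is summing absolute values of the (already displayed) coefficient vectors and comparing the two resulting denominators.
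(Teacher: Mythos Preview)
Your proof is correct and follows essentially the same approach as the paper: the paper also computes the $L_1$-norm of each coefficient vector (phrased there as the $L_\infty$ distance from the hyperplane $l_i = c$ to the origin being $|c|/\sum_j |a_j|$), obtains $112 + 64\sqrt2$ for $l_1,\dots,l_4$ and $192 + 128\sqrt2$ for $l_5,l_6$, and uses the larger value. One small point worth making explicit is that your separate summation of the rational and $\sqrt2$ parts yields the correct $\|a^{(i)}\|_1$ only because, in each displayed coefficient $a_j + \sqrt2\, b_j$, the integers $a_j$ and $b_j$ have the same sign (or one vanishes); this holds in all cases but is not automatic.
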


In \Cref{fig:coeffBounds}, the blue curve represents the above bound for $c$ in the interval $[7,12]$.
For $c=9.75$ we obtain the bound $9.75 / (128\sqrt2+192)\approx 0.02614$.

\subsection{The inequalities (\ref{width:comp:ineq})}
\label{sec:width:comp:ineq}

The inequalities  $(v_i-v) M(\bs)^{\#} u_i >0$ are 66 polynomial inqualities of degree two. To find a neighborhood where these inequalities hold we use the following criterion, involving only the coefficients of the polynomials:

\begin{proposition} \label{thm:root_pos_poly}
Let $f(x_1,\dots,x_n)$ be a polynomial in several variables with $f(0,\dots,0)\ne 0$. Let $f_0(x)$ be the univariate polynomial obtained from $f$ as follows:
\begin{itemize}
\item Change all coefficients to their absolute values, except the constant term that is changed to minus its absolute value.

\item Make all variables equal to a single one, $x$.
\end{itemize}

Then, no zero of $f$ has $L_\infty$-norm smaller than the unique positive root of $f_0$.
\end{proposition}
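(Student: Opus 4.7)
The plan is a direct application of the triangle inequality combined with a monotonicity argument. Write $f(x_1,\dots,x_n) = a_0 + \sum_{\alpha \ne 0} c_\alpha x^\alpha$, where $a_0 = f(0,\dots,0) \ne 0$ and the sum is over nonzero multi-indices $\alpha$. By construction, $f_0(x) = -|a_0| + \sum_{\alpha \ne 0} |c_\alpha| x^{|\alpha|}$, a univariate polynomial with $f_0(0) = -|a_0| < 0$.

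First I would establish that $f_0$ has a unique positive root. On the half-line $x \ge 0$, every nonconstant term of $f_0$ is nonnegative, and the derivative $f_0'(x) = \sum_{\alpha \ne 0} |c_\alpha|\,|\alpha|\, x^{|\alpha|-1}$ is nonnegative there; moreover it is strictly positive for $x > 0$ as soon as $f$ has at least one nonconstant term (if $f$ is a nonzero constant, the statement is vacuous since $f$ has no zeros). Hence $f_0$ is strictly increasing on $[0,\infty)$. Together with $f_0(0) < 0$ and $f_0(x) \to +\infty$ as $x \to +\infty$, this gives a single positive root, which I denote by $\rho$.

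For the main inequality, suppose $y = (y_1,\dots,y_n) \in \R^n$ (or $\mathbb{C}^n$, if we wish) is a zero of $f$, and set $M := \|y\|_\infty = \max_i |y_i|$. Then $|y^\alpha| \le M^{|\alpha|}$ for every monomial. From $f(y) = 0$ we rewrite $a_0 = -\sum_{\alpha \ne 0} c_\alpha y^\alpha$ and apply the triangle inequality to obtain
\[
|a_0| \;\le\; \sum_{\alpha \ne 0} |c_\alpha|\, |y^\alpha| \;\le\; \sum_{\alpha \ne 0} |c_\alpha|\, M^{|\alpha|},
\]
which is precisely $f_0(M) \ge 0$. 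Since $f_0$ is strictly increasing on $[0,\infty)$ with unique positive root $\rho$, the inequality $f_0(M) \ge 0$ forces $M \ge \rho$, as required.

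There is essentially no serious obstacle here: the only point that needs a little care is the uniqueness and existence of the positive root of $f_0$, and handling the degenerate case in which $f$ has no nonconstant terms (in which case $f$ has no zeros, so there is nothing to prove). The argument works verbatim over $\R$ or $\mathbb{C}$, so in particular it applies to all real zeros of the polynomial inequalities in \eqref{width:comp:ineq}.
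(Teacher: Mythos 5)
Your proof is correct and follows essentially the same route as the paper's: bound the constant term by the triangle inequality at a zero $y$, deduce $f_0(\|y\|_\infty)\ge 0$, and conclude from the sign behavior of $f_0$ on $[0,\infty)$. The only cosmetic difference is that you establish uniqueness of the positive root via monotonicity of $f_0$ rather than Descartes' rule of signs, and you explicitly dispose of the degenerate constant case; both are fine.
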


\begin{proof}
$f_0$ is a univariate polynomial with negative coefficient in degree zero and positive coefficient in all other degrees. From this, Descartes' rule of signs implies that it has a unique positive root, that we denote by $r_0$.

For each $J\in \N^n$, let $c_J$ denote the coefficient of multidegree $J$ in $f$. Let $a_j$ denote the sum of absolute values of all $c_J$ for each fixed total degree $j\in \N$. Observe that $f_0(x) = \sum_{j=1}^\infty a_j\, x^j -a_0$.
If $z = (z_1,\dots,z_n)$ is a zero of $f$ with $r=\max_i |z_i|$, we have
\[
0=f(z_1,\dots,z_n) = c_0 + \sum_{J\ne 0} c_J z^J,
\]
so that
\[
a_0 = |c_0| = \bigg| \sum_{J\ne 0} c_J z^J \bigg|
\le \sum_{j=1}^\infty a_j\, r^j  = f_0(r) + a_0.
\]
Thus, $f_0(r) \ge  0$, which implies that $r\ge r_0$.
\end{proof}

Applying \Cref{thm:root_pos_poly} to each of the 66 quadratic polynomials and using \textit{SageMath} \cite{Sage} to execute the computations, we found the following bound:

\begin{corollary}
\label{coro:quadratic}
If $\bs \in \R^8$ is chosen such that $\|\bs\|_\infty < 0.04423$, then the  quadratic inequalities \eqref{width:comp:ineq} are satisfied.
\end{corollary}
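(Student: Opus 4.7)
The plan is to apply \Cref{thm:root_pos_poly} separately to each of the polynomials appearing in~\eqref{width:comp:ineq} and take the minimum of the resulting radii. For $i \in \{1, \dots, 6\}$ and each vertex $v$ of the difference body $\Delta - \Delta$ with $v \ne v_i$, I set
\[
g_{i,v}(\bs) := (v_i - v)\, M(\bs)^{\#}\, u_i.
\]
Since $\Delta - \Delta$ has $12$ vertices, this gives $6 \times 11 = 66$ polynomials in the eight $\bs$-variables. Each entry of $M(\bt)^{\#}$ is a polynomial of degree two in $\bt$ and the change of variables $\bt = \bt(\bs)$ is linear, so every $g_{i,v}$ has degree two in $\bs$.

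First I would check the positivity at the origin, $g_{i,v}(\bo) > 0$, for every pair $(i,v)$. This is precisely the defining property of $v_i$ from \Cref{sec:setting}: the unperturbed functional $u_i$ attains its maximum over $\Delta$ strictly at the unique vertex pair encoded by $v_i$. Hence \Cref{thm:root_pos_poly} applies and guarantees that no zero of $g_{i,v}$ has $L_\infty$-norm below the unique positive root $r_{i,v}$ of the associated univariate polynomial $(g_{i,v})_0$. Because each $g_{i,v}$ is quadratic, $(g_{i,v})_0$ is a univariate quadratic with negative constant term and nonnegative coefficients in the other two degrees, so $r_{i,v}$ is given in closed form by the quadratic formula.

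Setting $r := \min_{(i,v)} r_{i,v}$, none of the $g_{i,v}$ vanishes on the open $L_\infty$-ball of radius $r$ around $\bo$; since each is positive at $\bo$ and polynomials are continuous, each $g_{i,v}$ remains positive throughout that ball, which is precisely the content of~\eqref{width:comp:ineq}. All that remains is a numerical verification that $r \ge 0.04423$.

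The main, in fact the only, obstacle is bookkeeping: expanding the $66$ polynomials in exact arithmetic over $\mathbb{Q}(\sqrt 2)$, extracting their coefficients, and running the quadratic formula $66$ times. This is routine in \textit{SageMath}. One could in principle exploit the $\Z/4\Z$-symmetry described in \Cref{subsec:bs} to reduce the number of distinct polynomials that have to be handled by a factor of four, but since a degree-two polynomial in eight variables is cheap to manipulate symbolically the saving is marginal and not needed for the claimed bound.
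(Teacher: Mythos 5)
Your proposal matches the paper's proof, which consists precisely of applying \Cref{thm:root_pos_poly} to each of the $66$ quadratic polynomials $(v_i-v)M(\bs)^{\#}u_i$ and computing the minimum of the resulting positive roots in \textit{SageMath} in exact arithmetic. The only additions you make — checking positivity at $\bo$ (which follows from the strict uniqueness of the maximizing vertex $v_i$) and invoking continuity to pass from zero-freeness to positivity on the ball — are correct and implicit in the paper's argument.
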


\subsection{The Hessian (\ref{hess:ineq})}
\label{sec:hess:ineq}

Let $U$ be a connected open subset in the space of real symmetric $n\times n$ matrices. Suppose that the determinant never vanishes in $U$ and that $U$ contains a negative definite matrix $M_0$. Then, all matrices in $U$ are negative definite, because all eigenvalues of $M_0$ are strictly negative and, by continuity of eigenvalues, every matrix in $U$ has all its eigenvalues strictly negative.

Hence, what we need to compute in this section is an open ball $U$ in the $\bs$ coordinates such that $\nabla^2h(\mathbf s)$ has strictly positive determinant for every $\bs \in U\setminus\{\bo\}$.
Once a value of $c$ is chosen, the entries of $\nabla^2h(\mathbf s)$ are quadratic polynomials in $\bs$, so its determinant  is a polynomial of degree $16$ in $8$ variables. Thus, in order to compute a neighborhood of $\bo$ in which the determinant of $\nabla^2h(\mathbf s)$ stays positive we can apply \Cref{thm:root_pos_poly} to this polynomial. Doing this with $c=9.75$ we found that:

\begin{corollary}
\label{coro:hessian}
If $c=9.75$ and $\|\bs\|_\infty  < 0.02646$, then $\nabla^2h(\mathbf s)$ is negative definite.
\end{corollary}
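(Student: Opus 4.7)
The plan is to combine the continuity-of-eigenvalues argument sketched at the opening of \Cref{sec:hess:ineq} with \Cref{thm:root_pos_poly}. The starting observation is that, for $c=9.75$, the matrix $\nabla^2 h(\bo)$ is already negative definite: its entries are determined by the degree-two part of $h$ at $\bo$, namely $c \sum_i q_i - \sum_i l_i^2$, which is an explicit $8\times 8$ symmetric matrix over $\mathbb{Q}(\sqrt{2})$ whose negative definiteness can be certified in exact arithmetic (for instance, via the sign pattern of the leading principal minors). Because eigenvalues depend continuously on the matrix entries, the path-connected component of $\bo$ in the open set $\{\bs : \det \nabla^2 h(\bs) \neq 0\}$ consists entirely of negative definite matrices. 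It therefore suffices to find an open $L_\infty$-ball of radius $0.02646$ around $\bo$ on which $\det \nabla^2 h$ has no zero.

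To this end, consider the polynomial
\[
D(\bs) := \det \nabla^2 h(\bs).
\]
Since $h$ has total degree $4$ in $\bt$ (each summand is a product of a linear factor with a cubic factor), its Hessian has entries of degree $2$, so $D$ is a polynomial of degree $16$ in the eight variables $\bs$, with coefficients in $\mathbb{Q}(\sqrt{2})$, and $D(\bo)\neq 0$ by the previous paragraph. I would compute $D$ explicitly in \emph{SageMath} with $c=9.75$, then form the auxiliary univariate polynomial $D_0(x)$ prescribed by \Cref{thm:root_pos_poly} (absolute values on every coefficient, negated constant term, all variables collapsed to one), isolate its unique positive root $r_0$ (unique by Descartes' rule of signs, since every nonconstant coefficient is non-negative and only the constant term is negative), and verify that $r_0 > 0.02646$. \Cref{thm:root_pos_poly} then guarantees that $D$ has no zero in the $L_\infty$-ball of radius $r_0$, and continuity of eigenvalues closes the argument.

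The main obstacle is computational rather than conceptual. The $8\times 8$ Hessian has reasonably dense quadratic entries over $\mathbb{Q}(\sqrt{2})$, and expanding the determinant into all coefficients of a degree-$16$ polynomial in $8$ variables is delicate in terms of both memory and time; one must keep $\mathbb{Q}(\sqrt{2})$ as an exact extension rather than falling back on floating point, since the entire argument rests on a sign determination for $D(\bs)$. A more subtle issue is that the bound from \Cref{thm:root_pos_poly} is intrinsically crude, treating every coefficient as if it had the worst possible sign; this is precisely why, as indicated in \Cref{rem:c-dependence}, the constant $c$ must be tuned, with $c=9.75$ turning out to be essentially optimal among the values tested.
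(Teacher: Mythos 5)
Your proposal is correct and follows essentially the same route as the paper: certify negative definiteness of $\nabla^2 h(\bo)$ for $c=9.75$ in exact arithmetic, invoke continuity of eigenvalues on a connected set where the determinant does not vanish, and apply \Cref{thm:root_pos_poly} to the degree-$16$ determinant polynomial to obtain the radius $0.02646$. You also correctly anticipate the practical bottleneck (the exact expansion over $\mathbb{Q}(\sqrt2)$, which the authors report took about 14 hours) and the reason $c$ must be tuned.
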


The computations were performed in \textit{SageMath} \cite{Sage} in exact arithmetic and they took about 14 hours with a computer with 8GB of RAM.

\begin{remark}[{\it Why $c=9.75?$}]
\label{rem:why-9.75}
Using \textit{SageMath}, we experimentally checked that the Hessian $\nabla^2h(\mathbf 0)$  stays negative definite for $c \in [0,13.254)$. Our first trial for $c$ was $c=7$, which gave a neighborhood for negative-definiteness much bigger than the one we have for that value of $c$ in (\ref{lin:bounds}). Since the latter increases with $c$ (see \Cref{coro:linear}, or the blue line in \Cref{fig:coeffBounds}), we recomputed the bound for the Hessian with bigger values of $c$, obtaining that the bound for the Hessian decreases with $c$ and meets the one for (\ref{lin:bounds}) very close to $c=9.75$. Finding the exact optimal $c$ is meaningless, since the bound provided by \Cref{thm:root_pos_poly} is (expected to be) much smaller than the smallest $L_\infty$-norm among the roots of the multivariate polynomial under study (observe that we are speaking of a polynomial of degree 16 in 8 variables).

The computational results are given in Table~\ref{tab:list} and \Cref{fig:coeffBounds}.
\end{remark}

\begin{table}[ht!]
\small
\begin{tabular}{c|ccccccc}
 c& 7 & 8 & 9 & 9.75 & 10 & 11 & 12 \\
\hline
(\ref{det:ineq})        & = & = & = & \textbf{0.10355} & = & = & = \\
(\ref{lin:bounds})      & 0.01877 & 0.02145 & 0.02413 & \textbf{0.02614} & 0.02681 & 0.02949 & 0.03217 \\
(\ref{width:comp:ineq}) & = & = & = & \textbf{0.04423} & = & = & = \\
(\ref{hess:ineq})       & 0.03185 & 0.03028 & 0.02834 & \textbf{0.02646} & 0.02571 & 0.02123 & 0.01501 \\
\end{tabular}\\[2mm]
\caption{Bounds for different values of $c$} \label{tab:list}
\end{table}

\begin{figure}[ht!]
  \centering
\includegraphics[width=\textwidth]{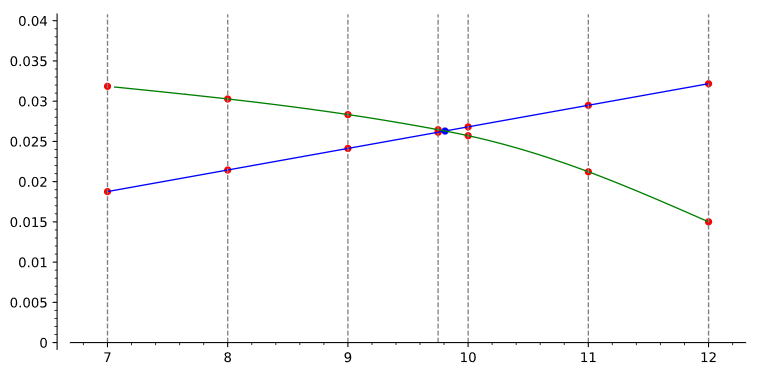}
\caption{The two bounds that depend on $c$, for different values of $c$ on the $x$-axis.
Blue: the bound for (\ref{lin:bounds}), linear in $c$.
Green: the bound for (\ref{hess:ineq}), experimental.
}
\label{fig:coeffBounds}
\end{figure}

\section{A search space of lattice-width maximizers}
\label{sec:global}

In this section, we give geometric properties that a hollow three-dimensional convex body must satisfy to be a lattice-width maximizer. This means that we are limiting the search space for possible counterexamples to \Cref{conj:maximal}.

\subsection{Relevant inequalities}

We consider the following invariants of a convex body $K$ in dimension three (the relations among these are the topic of \cite{KanLov}): the covering minima $\mu_i(K)$, the successive minima $\lambda_i(K-K)$ of the difference body $K-K$ of $K$, where $i \in\{1,2,3\}$, the (Euclidean) volume
\(\vol(K)
\)
of $K$,  the volume of the difference body
\(\vol(K-K)
\)
and the volume of the dual of the difference body
\(
\vol((K-K)^\ast).
\)

Covering and successive minima depend on an ambient lattice, which in this section we assume to be the standard lattice $\Z^3$.
The $i$-th \textit{covering minimum} $\mu_i(K)$ of $K$ is the minimal $\mu>0$ such that $\mu K + \Z^3$ has non-empty intersection with every $(3-i)$-dimensional affine subspace of $\R^3$. The \textit{successive minimum} $\lambda_i(C)$ of an origin-symmetric convex body $C$ is the minimal $\lambda>0$ such that the lattice vectors in $\lambda C$ span a vector space of dimension at least $i$. It directly follows from these definitions that
\begin{align*}
0 &< \lambda_1(K-K) \le \lambda_2(K-K) \le \lambda_3(K-K),
\\ 0 & < \mu_1(K) \le \mu_2(K) \le \mu_3(K).
\end{align*}
Both covering minima and successive minima are related to the lattice width $w(K) := \width_{\Z^3}(K)$, via the following equalities from \cite{KanLov}:
\[
w(K) = \frac1{\mu_1(K)} = \lambda_1((K-K)^*).
\]
The following inequalities are known:
\begin{align}
\mu_3(K) & \le \mu_2(K) + \lambda_1(K-K), \label{mu3:eq}
\\	\mu_2(K) & \le (1+ 2 / \sqrt{3} ) \mu_1(K), \label{mu21:eq}
\\ \lambda_1(K-K)^3 \vol(K-K)  & \le 8, \label{mink:2}
\\  \vol((K-K)^\ast) & \le 8 \mu_1(K)^3. \label{mink:dual}
\end{align}
Here, \eqref{mu3:eq} and \eqref{mu21:eq} can be found in \cite{KanLov}; \eqref{mu21:eq} is a re-formulation of the exact flatness theorem in dimension two of Hurkens \cite{Hurkens}; \eqref{mink:2} and \eqref{mink:dual} are Minkowski's first theorem (see \cite{GruLek}) applied respectively to $K-K$ and $(K-K)^*$.

We also make use of the following inequalities, not related to the lattice:
\begin{align}
 8 \vol(K) & \le \vol(K-K), \label{brunn:mink:eq} && \text{(Brunn-Minkowski)}
\\  \vol(K-K) & 	\le 20 \vol(K), \label{diff:body:eq} && \text{(Rogers-Shephard)}
\\ \frac{32}{3} & \le \vol(K-K) \cdot \vol((K-K)^\ast). \label{mahler:vol:eq} && \text{(Mahler's inequality)}
\end{align}
Inequalities \eqref{brunn:mink:eq} and \eqref{diff:body:eq} are well-known (see, for example, \cite{GruLek}).
Equality is attained in \eqref{brunn:mink:eq} if and only if $K$ is centrally symmetric,
and in \eqref{diff:body:eq} if and only if $K$ is a tetrahedron.
 Inequality
\eqref{mahler:vol:eq} is the three-dimensional case of Mahler's conjecture, and has recently been established in \cite{IriShi}. It is an equality when $K-K$ and its polar are an axis-parallel cube and octahedron.

We are particularly interested in three-dimensional hollow convex bodies. For these, we obtain the following inequalities:
\begin{lemma}\label{lem:derived:ineq}
	Let $K$ be a hollow convex body in dimension three. Then
	\begin{align}
	1 & \le \mu_3(K) \label{hollow_diseq}
	\\1 & \le (1 + 2 /\sqrt{3}) \mu_1(K)  + \lambda_1(K-K)  \label{deriv_eq:1}
	\\ 4 & \le 3 \mu_1(K)^3 \vol(K-K). \label{deriv_eq:2}
	\end{align}
\end{lemma}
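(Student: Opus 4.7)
The three inequalities split naturally. Inequality \eqref{hollow_diseq} is the one that uses hollowness in an essential way; \eqref{deriv_eq:1} then follows by chaining \eqref{hollow_diseq} with the already-listed bounds; and \eqref{deriv_eq:2} does not even need hollowness, coming directly from the Mahler and Minkowski inputs.

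I would prove \eqref{hollow_diseq} by contradiction. Assume $\mu_3(K) < 1$; then there exists $t \in (0,1)$ with $tK + \Z^3 = \R^3$. Fix any interior point $c \in \mathrm{int}(K)$. The standard convexity fact that a strict convex combination of an interior point of $K$ and any point of $K$ again lies in $\mathrm{int}(K)$ shows that $(1-t)c + tK \subseteq \mathrm{int}(K)$. Translating both sides of $tK + \Z^3 = \R^3$ by the vector $(1-t)c$ preserves the equality (a translate of $\R^3$ is $\R^3$), giving $(1-t)c + tK + \Z^3 = \R^3$, and hence $\mathrm{int}(K) + \Z^3 = \R^3$. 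Evaluating this covering equality at the origin produces $z \in \Z^3$ and $k \in \mathrm{int}(K)$ with $k = -z$, so the lattice point $-z$ lies in $\mathrm{int}(K)$, contradicting hollowness. The delicate point, which I expect to be the main obstacle, is that one cannot directly conclude $tK \subseteq \mathrm{int}(K)$ from $t < 1$ without knowing $0 \in K$; the translation trick via a chosen interior point $c$ is what circumvents this, producing a subset of $\mathrm{int}(K)$ on which the covering condition still holds.

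Once \eqref{hollow_diseq} is in hand, inequality \eqref{deriv_eq:1} is the three-step chain
\[
1 \le \mu_3(K) \le \mu_2(K) + \lambda_1(K-K) \le \left(1 + \tfrac{2}{\sqrt{3}}\right)\mu_1(K) + \lambda_1(K-K),
\]
applying \eqref{hollow_diseq}, \eqref{mu3:eq}, and \eqref{mu21:eq} in that order. Finally, \eqref{deriv_eq:2} combines the three-dimensional Mahler inequality \eqref{mahler:vol:eq} with the Minkowski-type bound \eqref{mink:dual}:
\[
\tfrac{32}{3} \le \vol(K-K)\cdot\vol((K-K)^\ast) \le 8\,\mu_1(K)^3\,\vol(K-K),
\]
which rearranges to $4 \le 3\mu_1(K)^3\vol(K-K)$, as required.
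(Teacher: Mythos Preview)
Your proof is correct and follows essentially the same route as the paper's. The paper dispatches \eqref{hollow_diseq} with the single phrase ``follows directly from the definition''; your contradiction argument via the interior-point translation $(1-t)c + tK \subseteq \mathrm{int}(K)$ is exactly the detail one would supply to justify that phrase, and your derivations of \eqref{deriv_eq:1} and \eqref{deriv_eq:2} match the paper's chains (\eqref{hollow_diseq}+\eqref{mu3:eq}+\eqref{mu21:eq} and \eqref{mahler:vol:eq}+\eqref{mink:dual}, respectively) verbatim.
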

\begin{proof}
The inequalities are direct implications of the inequalities and remarks given above:
\eqref{hollow_diseq} follows directly from the definition, while \eqref{deriv_eq:1} is obtained by combining \eqref{mu3:eq} and \eqref{mu21:eq} with \eqref{hollow_diseq}, and \eqref{deriv_eq:2} by combining \eqref{mahler:vol:eq} and \eqref{mink:dual}.
\end{proof}

\subsection{Bounds on volume and width}
Suppose now that $K$ is a maximizer for the lattice width among three-dimensional hollow convex sets. Then $w(K)=\mu_1^{-1} \ge 2 + \sqrt{2}$. Hence,
	\begin{align*}
	1 & \stackrel{\eqref{deriv_eq:1}}{\le}\left( 1 + \frac{2}{\sqrt{3}}\right) \mu_1(K) + \lambda_1(K-K)
	\le \frac{1 + 2/\sqrt{3}}{2 + \sqrt{2}} + \lambda_1(K-K),
	\end{align*}
	which gives a lower bound on the first successive minimum of $K-K$:
	\begin{align}\label{first_success:eq}
	\lambda_1(K-K) \ge 1 - \frac{1 + 2 / \sqrt{3}}{2 + \sqrt{2}}.
\end{align}

The following statement bounds width and volume of maximizers, combining ideas from
\cite{AverkovKrumpelmannWeltge,Dash_etal,IglSan}.

\begin{theorem} \label{thm:maximizer:properties} Let $K$ be a maximizer for the lattice width among three-dimensional hollow convex sets. Then
	\begin{align*}
	3{.}414 < & w(K) <  3{.}972 & &\text{and} &	2{.}653 < & \vol(K) < 19{.}919.
	\end{align*}
\end{theorem}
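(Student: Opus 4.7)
The plan is to chain the collected inequalities starting from a single observation: any width maximizer $K$ must satisfy $w(K) \ge 2+\sqrt{2}$ (equivalently $\mu_1(K) \le 1/(2+\sqrt{2})$), because the explicit tetrahedron $\Delta$ of \Cref{eq:Delta} is already a hollow body of that width. Since $2+\sqrt{2} \approx 3.41421$, this immediately gives the lower bound $w(K) > 3.414$, and it is exactly what is used to deduce \eqref{first_success:eq}.

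For the upper bound on $w(K)$, I would combine \eqref{mink:2}, \eqref{deriv_eq:1}, and \eqref{deriv_eq:2}. Rewriting \eqref{deriv_eq:1} as $\lambda_1(K-K) \ge 1 - (1+2/\sqrt{3})/w(K)$, cubing, and multiplying by \eqref{deriv_eq:2} gives
\[
\lambda_1(K-K)^3\,\vol(K-K) \ge \tfrac{4}{3}\bigl(w(K) - 1 - 2/\sqrt{3}\bigr)^3,
\]
while Minkowski's first theorem \eqref{mink:2} caps the left-hand side at $8$. Solving for $w(K)$ yields $w(K) \le 1 + 2/\sqrt{3} + 6^{1/3} \approx 3.9719 < 3.972$. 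Notice that Mahler's inequality \eqref{mahler:vol:eq} enters here implicitly through \eqref{deriv_eq:2}.

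For the volume bounds I would sandwich $\vol(K)$ between $\vol(K-K)/20$ and $\vol(K-K)/8$ via Rogers-Shephard \eqref{diff:body:eq} and Brunn-Minkowski \eqref{brunn:mink:eq}. Combining the lower side with \eqref{deriv_eq:2} and the already-known $w(K) \ge 2+\sqrt{2}$ gives
\[
\vol(K) \ge \frac{w(K)^3}{15} \ge \frac{(2+\sqrt2)^3}{15} = \frac{20+14\sqrt{2}}{15} > 2.653.
\]
Combining the upper side with \eqref{mink:2} and the bound on $\lambda_1(K-K)$ from \eqref{first_success:eq} gives
\[
\vol(K) \le \frac{1}{\lambda_1(K-K)^3} \le \frac{1}{\bigl(1 - (1+2/\sqrt{3})/(2+\sqrt{2})\bigr)^3} < 19.919,
\]
since the right-hand side evaluates numerically to roughly $19.9187$.

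No step presents a serious obstacle; the computation is essentially bookkeeping once one has the derived inequalities of \Cref{lem:derived:ineq} in hand. The only real design choice is when to substitute the numerical value $w(K) \ge 2+\sqrt{2}$: one keeps $w(K)$ symbolic in the upper width bound (so that the resulting cubic inequality can be solved for $w$) but substitutes the concrete value in the upper volume bound, where no such algebraic rearrangement is possible.
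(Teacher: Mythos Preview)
Your argument is correct and follows the paper's proof essentially line by line: the lower width bound from the example $\Delta$, the upper width bound via \eqref{deriv_eq:1}, \eqref{mink:2} and \eqref{deriv_eq:2} (your $6^{1/3}$ is exactly the paper's $2(3/4)^{1/3}$), and the two volume bounds via Rogers--Shephard/Brunn--Minkowski combined with \eqref{deriv_eq:2} and \eqref{first_success:eq}. The only cosmetic difference is that you cube and multiply before invoking \eqref{mink:2}, which silently uses $w(K)>1+2/\sqrt{3}$ to keep both factors nonnegative---already guaranteed by the lower bound you established first.
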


\begin{proof}
	The lower bound on $w(K)$ is provided by rounding the lower bound $w(K) \ge 2 + \sqrt{2}$, which follows from the existence of a hollow tetrahedron with  lattice width $2 + \sqrt{2}$ \cite[Theorem 5.2]{CodSan}.  The upper bound follows from the same chain of estimates as in \cite{Dash_etal}, except our use of Mahler's inequality, proved recently, gives a better bound than the one stated there:
	\begin{align*}
	1 & \stackrel{\eqref{deriv_eq:1}}{\le} (1+ 2/\sqrt{3}) \mu_1(K) + \lambda_1(K-K)
	\\ & \stackrel{\eqref{mink:2}}{\le}  (1+ 2/\sqrt{3}) \mu_1(K) + \frac{2}{\vol(K-K)^{1/3}}
	\\ & \stackrel{\eqref{deriv_eq:2}}{\le}  (1+ 2/\sqrt{3}) \mu_1(K) + 2 \left(\frac{3}{4}\right)^{1/3} \mu_1(K).
	\end{align*}
	This gives $w(K) = \frac{1}{\mu_1(K)} \le 1 + 2 /\sqrt{3} + 2 \left(\frac{3}{4}\right)^{1/3} < 3{.}972$.
	
	The bounds on $\vol(K)$ are derived from the estimates
\[
\vol(K) \!\stackrel{\eqref{diff:body:eq}}{\ge}\! \frac{1}{20} \vol(K-K) \!\stackrel{\eqref{deriv_eq:2}}{\ge}\! \frac{1}{20} \cdot \frac{4}{3 \mu_1(K)^3} \!=\! \frac{1}{15} w(K)^3 \!\ge\! \frac{1}{15} (2 + \sqrt{2})^3,
\]
	and
	\begin{align*}
	\vol(K)  \stackrel{\eqref{brunn:mink:eq}}{\le}  \frac{1}{8} \vol(K-K) \stackrel{\eqref{mink:2}}{\le}  \frac{1}{\lambda_1(K-K)^3}
	\stackrel{\eqref{first_success:eq}}{\le} \frac{1}{\left( 1 - \frac{1 +2 /\sqrt{3}}{2 + \sqrt{2}}\right)^3}.
	\end{align*}
\end{proof}

\subsection{Lattice polytopes inscribed in maximizers}

We say that a lattice polytope $P$ is \emph{inscribed} in a hollow polytope $K$ if $P$ contains at least one lattice point from the relative interior of each facet of $K$. Every width maximizer has such a (perhaps not unique) inscribed polytope, by the following result of  Lov\'asz \cite{Lov} (see \cite{Ave} and \cite{BCCZ} for complete proofs):

\begin{proposition}
\label{prop:lovasz}
Every maximal hollow convex set $K$ is a polyhedron and has at least one lattice point in the relative interior of each facet.
\end{proposition}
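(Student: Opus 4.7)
My plan is to prove the two assertions separately: first, that every $(d-1)$-dimensional face of $K$ contains a lattice point in its relative interior (via a perturbation argument); and second, that $K$ has only finitely many such facets and coincides with their common half-space intersection (using discreteness of the lattice).

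For the first part, I argue by contradiction. Suppose some facet $F = K \cap H$ of $K$, with supporting hyperplane $H = \{y : \langle a, y \rangle = b\}$ and $K \subseteq \{y : \langle a, y \rangle \le b\}$, contains no lattice point in its relative interior. For small $\epsilon > 0$, relax only this inequality to $\langle a, y \rangle \le b + \epsilon$, keeping all other supporting inequalities of $K$ unchanged; this produces a convex body $K_\epsilon \supsetneq K$ with $K_\epsilon \setminus K$ contained in an $\epsilon$-neighborhood of $F$. I claim that $K_\epsilon$ remains hollow for all sufficiently small $\epsilon$: by discreteness of $\Z^d$ and compactness of $F$, any lattice point entering $\mathrm{int}(K_\epsilon)$ as $\epsilon \to 0^+$ would accumulate in the closure of the relative interior of $F$; such a limit in the relative interior is excluded by hypothesis, while a limit on the relative boundary of $F$ would lie on an adjacent facet $F'$ of $K$ whose defining inequality is unchanged and hence would remain on $\partial K_\epsilon$ rather than entering its interior. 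This contradicts the maximality of $K$.

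For the second part, the relative interiors of distinct $(d-1)$-dimensional faces of a convex body are pairwise disjoint, so by the first part each facet is uniquely labeled by any lattice point in its relative interior. Since $K$ is compact and $\Z^d$ is discrete, $\partial K$ contains only finitely many lattice points, so $K$ has only finitely many facets $F_1, \dots, F_m$. Set $P = \bigcap_{i=1}^m H_i^-$, so $K \subseteq P$. If $K \ne P$, choose $x \in \partial K \cap \mathrm{int}(P)$ together with a supporting hyperplane of $K$ at $x$; this hyperplane cannot coincide with any $H_i$, and the pushing argument of the first part applied to it either enlarges $K$ (contradicting maximality) or exhibits a new $(d-1)$-dimensional face of $K$ carrying a lattice point in its relative interior, contradicting the count $m$.

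The main obstacle is making the pushing argument of the first part robust enough to handle the supporting hyperplane invoked in the second part: when the supporting hyperplane at $x \in \partial K \cap \mathrm{int}(P)$ meets $K$ in a face of dimension strictly less than $d-1$, a single outward perturbation may not immediately produce a $(d-1)$-dimensional face. One resolves this by perturbing several supporting inequalities of $K$ simultaneously so as to create a genuine new facet carrying a lattice point, all the while verifying via the same compactness-plus-discreteness ingredient that hollowness is preserved for sufficiently small perturbations.
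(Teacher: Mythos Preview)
The paper does not prove this proposition; it attributes the result to Lov\'asz \cite{Lov} and points to \cite{Ave} and \cite{BCCZ} for complete proofs. So there is no in-paper argument to compare against, and I assess your attempt on its own merits.

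There are two genuine gaps.

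First, you assume $K$ is compact (``Since $K$ is compact and $\Z^d$ is discrete, $\partial K$ contains only finitely many lattice points''). Maximal hollow convex sets need not be bounded: in any dimension a slab between two consecutive lattice hyperplanes is maximal hollow and unbounded. The proposition says ``convex set'', not ``convex body'', and the cited versions of Lov\'asz's theorem cover the unbounded case. Without compactness the finiteness of $\partial K \cap \Z^d$ fails and your whole second part collapses.

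Second, and more seriously, the dichotomy you invoke in the second part---that pushing the supporting hyperplane at $x\in\partial K\cap\mathrm{int}(P)$ ``either enlarges $K$ \dots or exhibits a new $(d-1)$-dimensional face''---is false. If $x$ is a smooth, strictly convex boundary point (think of a round cap on $\partial K$), relaxing its unique supporting half-space does \emph{not} enlarge $K$, because the continuum of nearby supporting half-spaces still cuts back to $K$; and there is no facet through $x$ at all, so neither branch of your dichotomy applies. You flag this yourself as the ``main obstacle'', but the proposed fix (``perturbing several supporting inequalities simultaneously so as to create a genuine new facet'') is not a proof: near a strictly convex point there are uncountably many supporting half-spaces, and no finite perturbation among them produces a facet. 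Ruling out such round pieces of $\partial K$ is precisely the nontrivial content of Lov\'asz's theorem. The published proofs in \cite{Ave,BCCZ} proceed differently: one first shows that maximality forces every supporting hyperplane of $K$ to contain a lattice point of $K$, and then uses this together with discreteness to extract a finite subfamily of supporting half-spaces whose intersection already equals $K$.
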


It is quite natural to approach the width maximization problem for hollow convex sets by distinguishing different choices of such inscribed polytopes and handling each case separately. This is in fact the approach that was used by Hurkens \cite{Hurkens} to settle the case of dimension two and we have also implicitly relied on it, although in our local situation $P$ is fixed; it is the unimodular tetrahedron $\conv(p_1,\dots,p_4)$.

It is clear that we can always choose the inscribed polytope $P$ to be an \emph{empty lattice polytope}, that is to say, a lattice polytope with no lattice point other than its vertices.
This happens, for example, if we pick a single lattice point from the interior of each facet of $K$ and let $P$ be their convex hull.
The fact that all empty lattice $3$-polytopes have width one allows us to provide a finite search space for the inscribed lattice polytopes that can occur in a width maximizer:

\begin{theorem} \label{thm:max:inscr:properties}
	Let $K$ be a width maximizer among hollow convex $3$-bodies and let $P$ be an empty lattice polytope inscribed in $K$. Then, up to unimodular equivalence, $P$ is either the square $\conv(0,e_1,e_2, e_1 + e_2)$ or a three-dimensional empty lattice polytope satisfying
	\[
	\vol(P) \le  \frac{22}{3}.
	\]
	Furthermore, if $P$ is a tetrahedron, then
	\[
	\vol(P) \le \frac{17}{6}.
	\]
\end{theorem}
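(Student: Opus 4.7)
The plan is to split into cases based on the dimension of $P$, after first using inscribedness to force $P$ to have at least four vertices.

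I would first invoke Proposition \ref{prop:lovasz}: the maximizer $K$ is a polytope with at least one lattice point in the relative interior of each of its (at least four) facets. Since $P$ is inscribed in $K$ and empty, each such lattice point must be a vertex of $P$. Hence $P$ has at least four vertices, lying in pairwise different facets of $K$ and therefore not collinear. This forces $\dim P \in \{2, 3\}$.

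In the two-dimensional case, $P$ is an empty lattice polygon. A convex lattice polygon with $I = 0$ interior lattice points and all boundary lattice points being vertices has at most four vertices (via Pick's theorem together with the classical fact that a convex lattice polygon with $\ge 5$ lattice points always contains an interior lattice point). Up to unimodular equivalence, $P$ is then either a unimodular triangle (three vertices, area $1/2$) or the unit square $\conv(0, e_1, e_2, e_1 + e_2)$ (four vertices, area $1$). The triangle is excluded by the vertex-count bound, so $P$ is the unit square.

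The three-dimensional case is the main obstacle. I would combine the classification of empty lattice $3$-polytopes with the global bounds $w(K) < 3{.}972$ and $\vol(K) < 19{.}919$ from \Cref{thm:maximizer:properties}. For tetrahedra, White's theorem normalises $P$ to $\conv(0, e_1, e_3, (p, q, 1))$ with $\vol(P) = q/6$ and $\gcd(p, q) = 1$, so the tetrahedral claim reduces to bounding the single integer $q \le 17$; this should follow by comparing the widths of $P$ in the standard lattice directions against the upper bound on $w(K)$, combined with the volume bound $\vol(P) \le \vol(K)$. For general empty $3$-polytopes, one exploits that they have bounded lattice width together with a known finite list of normal forms, and matches each of these against compatibility with being inscribed in a hollow body of width $\ge 2 + \sqrt{2}$, yielding the uniform bound $\vol(P) \le 22/3$.

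The hardest step is this three-dimensional case: the vertex count and the two-dimensional classification are essentially immediate, but turning the abstract containment $P \subseteq K$ into the sharp constants $22/3$ and $17/6$ will likely require a careful case analysis over the finite classification of empty lattice $3$-polytopes together with the width and volume bounds on $K$.
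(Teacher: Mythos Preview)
Your setup and the two-dimensional case are fine and match the paper. The three-dimensional case, however, has a real gap.

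First, there is no ``known finite list of normal forms'' for empty lattice $3$-polytopes: already the empty tetrahedra $T(p,q)=\conv(0,e_1,e_3,(p,q,1))$ with $\gcd(p,q)=1$ form an infinite family. So a case analysis over a finite classification is not available. Second, the mechanism you propose for bounding $q$ does not work. From $P\subseteq K$ you only get $\width(P,f)\le \width(K,f)$ for every functional $f$; taking the minimum over $f$ gives $\width(P)\le \width(K)$, but Howe's theorem already tells you $\width(P)=1$, which is useless here. In the other direction, the width of $T(p,q)$ in the $y$-direction equals $q$, but this lower-bounds $\width(K,e_2^*)$, not the lattice width $w(K)$, so the upper bound $w(K)<3.972$ says nothing about $q$. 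Likewise, $\vol(P)\le\vol(K)<19.919$ only gives $q\le 119$, far from $17$.

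The missing idea is to go through successive minima of the difference bodies. Howe's theorem (width one) is used not as a classification input but to force $\lambda_3(P-P)=1$. Minkowski's second theorem then gives
\[
\vol(P-P)\le \frac{8}{\lambda_1(P-P)\lambda_2(P-P)\lambda_3(P-P)}\le \frac{8}{\lambda_1(P-P)^2}.
\]
Since $P\subseteq K$ implies $P-P\subseteq K-K$, one has $\lambda_1(P-P)\ge \lambda_1(K-K)$, and the lower bound \eqref{first_success:eq} on $\lambda_1(K-K)$ (coming from $w(K)\ge 2+\sqrt2$ via \eqref{deriv_eq:1}) yields an explicit numerical upper bound on $\vol(P-P)$. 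Converting to $\vol(P)$ via $\vol(P-P)\ge 8\vol(P)$ and rounding down to a multiple of $1/6$ gives $22/3$; for tetrahedra one uses the equality $\vol(P-P)=20\vol(P)$ instead, which is what produces the sharper $17/6$. None of these steps is visible in your outline, and without them the constants cannot be obtained.
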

\begin{proof}
	Any three points from the relative interiors of three distinct facets of $P$ are not collinear. Hence, $P$ has dimension at least two. Since $K$ has at least four facets, $P$ has at least four vertices.
	
	If the dimension of $P$ is two then $P$ is the square $\conv(0,e_1,e_2,e_1+e_2)$, since this the unique (up to affine unimodular transformation) empty lattice polygon with four or more vertices.

	If $P$ is three-dimensional, then we use Howe's theorem \cite{Scarf}, which states that every empty $3$-polytope has width $1$. This means that $P-P$ intersects only three consecutive layers of the lattice $\Z^3$ and, moreover, every smaller copy $\lambda (P-P)$ with $0 < \lambda < 1$ intersects at most just one lattice layer. 	This implies $\lambda_3 (P-P) = 1$. Using the second theorem of Minkowski \cite{GruLek}, we obtain
	\begin{align*}
		\vol(P-P) & \le \frac{8}{\prod_{i=1}^3 \lambda_i(P-P)}  & & \text{( Minkowski's 2nd theorem)}
		\\ &
			= \frac{8}{\prod_{i=1}^2 \lambda_i(P-P)} & & \text{(since $\lambda_3(P-P)=1$)}
		\\ & \le \frac{8}{\lambda_1(P-P)^2} & & \text{(since $\lambda_2(P-P) \ge \lambda_1(P-P)$)}.
	\end{align*}
	In view of $P \subseteq K$, this yields
	\[
	\vol(P) \stackrel{\eqref{brunn:mink:eq}}{\le} \frac{\vol(P-P)}{8} \leq \frac{1}{\lambda_1(P-P)^2} \le \frac{1}{\lambda_1(K-K)^2} \stackrel{\eqref{first_success:eq}}{\le} \frac{1}{( 1 - \frac{1 +2 /\sqrt{3}}{2 + \sqrt{2}})^2}.
	\]

	Since $P$ is lattice polytope, $6 \vol(P)$ is an integer value. Thus, rounding appropriately, we obtain the desired upper bound. If $P$ is a tetrahedron, then the above estimates can be improved by taking into account the equality $\vol(P-P) = 20 \vol(P)$, attained in \eqref{diff:body:eq}. This results into an improved bound on $\vol(P)$.
\end{proof}

Theorem~\ref{thm:maximizer:properties} allows to split the problem of detecting the maximum lattice width of hollow three-dimensional convex sets  into finitely many cases, as one can fix one of the finitely many possible inscribed polytopes $P$ and then maximize the lattice width among hollow convex sets with the given inscribed set $P$. It would be nice to rule out $P$ being a square, which is the only case when the inscribed polytope is two-dimensional, but currently we do not know how to handle this case.

When a \emph{full-dimensional} $P$ is fixed, the upper bound on the volume of $K$ in Theorem~\ref{thm:max:inscr:properties} allows to determine a bounding region $B$ (say, a box)  that depends only on $P$, in which $K$ is necessarily contained. By providing a bounding region for $K$, we get rid of the necessity to keep track of all (infinitely many) lattice points while expressing the hollowness of $K$ algebraically. Thus, the property we want to verify (that all three-dimensional convex sets with a fixed three-dimensional inscribed empty polytope $P$ have lattice width at least $2 + \sqrt{2}$) can be phrased in the first-order language of the real algebra. We thus conclude that, theoretically, such a property is decidable via quantifier-elimination algorithms for the first-order real-algebra sentences \cite[pp. 22--29]{BPR}. However, since the first-order sentences would be extremely complex and since the quantifier-elimination algorithms are extremely slow, such a brute-force approach is doomed to failure in practice.

Nevertheless, the above comments suggest that there might exist a reasonable way to reduce the problem of determination of the flatness constant in dimension three into a purely algebraic problem in terms of real variables and a system of polynomial inequalities. In contrast to the two-dimensional case, we do not expect however that our problem in dimension three can be solved without computer, because it is very likely that one would be forced to consider a large number of different cases and deal with rather complex algebraic problems in each of these cases.


\begin{thebibliography}{99}
	
\bibitem{Ave} 
G.~Averkov,
\emph{A proof of Lov\'asz’s theorem on maximal lattice-free sets},
Beitr.~Algebra Geom. \textbf{54} (2013), no. 1, 105--109.

\bibitem{AverkovKrumpelmannWeltge}
G.~Averkov, J.~Kr\"umpelmann, and S.~Weltge,
\emph{Notions of maximality for integral lattice-free polyhedra: the case of dimension three},
Math. Oper. Res.  \textbf{42} (2017), no. 4, 1035--1062.

\bibitem{AveWag}
G.~Averkov and C.~Wagner,
\emph{Inequalities for the lattice width of lattice-free convex sets in the plane},
Beitr.~Algebra Geom. \textbf{53} (2012), 1--23.

\bibitem{Banaszczyk_etal}
W.~{Banaszczyk}, A.~E. {Litvak}, A.~{Pajor}, and S.~J. {Szarek},
\emph{The flatness theorem for nonsymmetric convex bodies via the local theory of Banach spaces},
Math.~Oper.~Res. \textbf{24} (1999), no.~3, 728--750.

\bibitem{Barvinok}
A. I. Barvinok,
\emph{A course in convexity},
Graduate Studies in Mathematics \textbf{54}, American Mathematical Society, 2002.

\bibitem{BCCZ}
A.~Basu, M.~Conforti, G.~Cornuéjols, and G.~Zambelli,
\emph{Maximal lattice-free convex sets in linear subspaces},
Math.~Oper.~Res. \textbf{35} (2010), no. 3, 704--720.

\bibitem{BPR}
S.~Basu, R.~Pollack, and M.-F.~Roy,
\emph{Algorithms in Real Algebraic Geometry},
Algorithms and Computation in Mathematics \textbf{10}, Springer Verlag, 2006.
Revised version of the second edition available online at \url{https://perso.univ-rennes1.fr/marie-francoise.roy/bpr-ed2-posted3.html}.

\bibitem{CodSan}
G.~Codenotti and F.~Santos,
\emph{Hollow polytopes of large width},
Proc.~Amer.~Math.~Soc. \textbf{148} (2020), 835--850.

\bibitem{Dash_etal}
S.~Dash, N.~B.~Dobbs, O.~G\"unl\"uk, T.~J.~Nowicki, and G.~M.~\'Swirszcz,
\emph{Lattice-free sets, multi-branch split disjunctions, and mixed-integer programming},
Mathematical Programming \textbf{145} (2014), 483--508.

\bibitem{Griva_etal}
I.~Griva, S.~G.~Nash, and A.~Sofer,
\emph{Linear and nonlinear optimization},
Society for Industrial and Applied Mathematics (SIAM),
Philadelphia, PA, 2009.

\bibitem{GruLek}
P.~M.~Gruber and C.~G.~Lekkerkerker,
\emph{Geometry of Numbers}, second ed.,
North-Holland Mathematical Library \textbf{37}, North-Holland Publishing Co., Amsterdam, 1987.

\bibitem{Hurkens}
C.~A.~J.~Hurkens,
\emph{Blowing up convex sets in the plane},
Linear Algebra Appl. \textbf{134} (1990), 121--128.

\bibitem{IriShi}
H.~Iriyeh and M.~Shibata,
\emph{Symmetric Mahler's conjecture for the volume product in the three dimensional case},
Duke Math. J., \textbf{169}:6 (2020), 1077--1134.

\bibitem{IglSan}
\'{O}.~Iglesias-Vali\~{n}o and F.~ Santos,
\emph{Classification of empty lattice 4-simplices of width larger than two},
Trans. Amer. Math. Soc. \textbf{371} (2019), no.~9, 6605--6625.
	
\bibitem{KanLov}
R.~Kannan and L.~Lov\'asz,
\emph{Covering minima and lattice-point-free convex bodies},
Ann.~Math.~Second series, \textbf{128} (1988), no.~3, 577--602.

\bibitem{Khinchine}
A.~Khinchine,
\emph{A quantitative formulation of Kronecker's theory of approximation},
Izv. Acad. Nauk SSSR, Ser. Mat. \textbf{12} (1948), 113--122 (in Russian).

\bibitem{Lov}
L.~Lov\'asz,
\emph{Geometry of numbers and integer programming},
Mathematical Programming (Tokyo, 1988), Math. Appl. vol. \textbf{6}, 177--201.

\bibitem{Rudelson}
M.~Rudelson,
\emph{Distances between nonsymmetric convex bodies and the MM$^*$-estimate},
Positivity \textbf{4} (2000), no.~2, 161--178.

\bibitem{Sage}
\emph{SageMath, the Sage Mathematics Software System (Version 8.7)},
The Sage Developers, 2019, \url{https://www.sagemath.org}.

\bibitem{Scarf}
H.~E.~Scarf,
\emph{Integral polyhedra in three space},
Math.~Oper.~Res. \textbf{10} (1985), no. 3, 403--438.
\end{thebibliography}
\end{document}